\documentclass[10pt]{article}


\usepackage{times}
\usepackage{amsthm}
\usepackage{amsmath}
\usepackage{amssymb}
\usepackage{mathrsfs}
\usepackage{pb-diagram}
\usepackage[all]{xy}
\usepackage{color}
\usepackage{comment}

\setlength{\textwidth}{16cm}
\setlength{\textheight}{21.8cm}
\setlength{\topmargin}{0mm}
\setlength{\oddsidemargin}{3mm}
\setlength{\evensidemargin}{3mm}
\setlength{\footskip}{1cm}


\usepackage{hyperref}

\renewcommand\eqref[1]{(\ref{#1})} 


\setlength{\textwidth}{15cm}
\setlength{\textheight}{21cm}
\setlength{\oddsidemargin}{0cm}
\setlength{\topmargin}{-0.7cm}



\def\A{{\mathcal A}}

\def\N{\mathbb{N}}
\def\T{\mathbb{T}}

\def\V{\mathcal V}

\def\C{{\sf C}}

\def\1{{\boldsymbol 1}}
\def\B{{\mathbb B}}

\def\H{\mathcal H}

\def\R{\mathbb{R}}

\def\S{\mathcal S}

\def\U{\mathbb U}

\def\W{{\mathcal W}}
\def\di{\diamond}

\def\e{{\sf e}}
\def\g{{\mathfrak g}}
\def\m{{\sf m}}
\def\wm{\widehat{\sf m}}

\def\({\left(}
\def\[{\left[}
\def\){\right)}
\def\]{\right]}

\def\a{\mathfrak{a}}

\def\G{{\sf G}}
\def\wG{\widehat{\sf{G}}}
\def\p{\parallel}
\def\<{\langle}
\def\>{\rangle}

\def\Op{{\sf Op}}
\def\op{{\sf op}}
\def\Sch{{\sf Sch}}
\def\M{{\cal M}}
\def\c{{\sf c}}
\def\v{{\sf v}}

\def\V{{\sf V}}
\def\id{{\sf id}}

\def\fscr{\mathscr}
\def\a{{\sf a}}



\newtheorem{Theorem}{Theorem}[section]
\newtheorem{Remark}[Theorem]{Remark}
\newtheorem{Lemma}[Theorem]{Lemma}

\newtheorem{Corollary}[Theorem]{Corollary}
\newtheorem{Proposition}[Theorem]{Proposition}
\newtheorem{Definition}[Theorem]{Definition}
\newtheorem{Definition-Proposition}[Theorem]{Definition-Proposition}


\numberwithin{equation}{section}

\begin{document}


\title{Twisted Pseudo-differential Operators\\ on Type I Locally Compact Groups}

\date{\today}

\author{H. Bustos and M. M\u antoiu \footnote{
\textbf{2010 Mathematics Subject Classification: Primary 81S30, 47G30, Secondary 22D10, 22D25.}
\newline
\textbf{Key Words:}  locally compact group, nilpotent Lie group, pseudo-differential operator, $C^*$-algebra, dynamical system, magnetic field.}
}
\date{\small}
\maketitle \vspace{-1cm}


\begin{abstract}
Let $\G$ be a locally compact group satisfying some technical requirements and $\wG$ its unitary dual. Using the theory of twisted crossed product $C^*$-algebras, we develop a twisted global quantization for symbols defined on $\G\times\wG$ and taking operator values. The emphasis is on the representation-theoretic aspect. For nilpotent Lie groups, the connection is made with a scalar quantization of the cotangent bundle $T^*(\G)$ and with a Quantum Mechanical theory of observables in the presence of variable magnetic fields.
\end{abstract}

\section{Introduction}\label{duci}

The aim of this article is to construct twisted pseudo-differential operators for symbols acting on  $\G\times \wG$\,, where $\G$ is a second countable type I locally compact group and $\wG$ is the unitary dual, formed of equivalence classes of irreducible representations of $\G$\,. Part of our motivation comes from the mathematical objects  involved in the quantization of a physical systems placed in a magnetic field. In \cite{MR}  a quantization for global operator-valued symbols was proposed and this type of calculus will be our model; we will generalize it to twisted pseudo-differential operators. The principal structures that arise from our constructions are the twisted crossed products of $C^{*}$-algebras and  twisted Weyl systems. Under some assumptions, both structures involve algebras of symbols and these algebras can be seen as a functional calculus associated to non-commutative observables.

The usual Wely calculus on the phase space $\R^{n}\times\R^{n}$ is connected with the crossed product $C^{*}$-algebra $\A\rtimes_{{\sf L}}\R^{n}$ associated to the action ${\sf L}$ of the group $\R^{n}$ by left translations over some $C^{*}$-algebra $\A$ composed of uniformly continuous functions on $\R^{n}$ which is invariant under translations. This approach has physical interpretation, modeling quantum observables suitably built from positions and momenta of a particle moving in $\R^{n}$. 

To include a variable magnetic field, one must introduce a ``twisted'' form of the Weyl calculus \cite{MP1,MPR1,MPR2}, obtaining a quantization of observables which is invariant under gauge transformations. This realization is based on twisting both the pseudo-differential calculus and the crossed products algebras by a $2$-cocycle defined on the group $\R^{n}$ with values in the unitary elements of the algebra $\A$\,. The 2-cocycle, that we denote by $\gamma$\,, is given by the imaginary exponential of the magnetic flux through suitable triangles. The $C^{*}$-algebraic approach permits to understand the magnetic pseudo-differential operators through a representation in $L^{2}(\R^{n})$ of a twisted crossed product. Indeed, it corresponds to the Schr\" odinger representation of the twisted dynamical system associated to the algebra $\A\!\rtimes_{{\sf L}}^{\gamma}\!\R^n$\,; this Schr\"odinger representation incorporates magnetic translations. Composing this representation with a partial Fourier transform, one obtains pseudo-differential operators on $\R^{n}\times\widehat{\R^{n}}$.

The Fourier transform plays a principal role in the construction of such quantizations. Harmonic Analysis tools can be used to generalize the construction, for Abelian groups for instance \cite{MPR1}, where the phase space $\R^{n}\times\widehat{\R^{n}}$ is replaced by the product of an Abelian group $\G$ with its unitary Pontryagin dual $\wG$\,, which is also an Abelian locally compact group. 

The present article generalizes the twisted pseudo-differential operators from the Abelian case \cite{MPR1} to non-commutative phase space. The main obstacle is the fact that, when $\G$ is not Abelian, the unitary dual $\wG$ is no longer a group. It still has a a natural (but complicated) measure theory with respect to which a non-commutative version of Plancherel theorem holds. The cohomological twisting will be shown to be possible in this context.

On the other hand, we also extend the highly non-commutative formalism introduced in \cite{MR} to the presence of a group $2$-cocycle. The constructions in \cite{MR} have been predated by the intensive study of pseudo-differential operators on particular types of non-commutative Lie groups, as compact or nilpotent. We cite the articles \cite{FR,FR2,RT1,RTW} and the books \cite{FR1,RT}; they contain many other relevant references. 

The present setting of second countable unimodular type I groups is quite general (besides compact and nilpotent, it contains the Abelian, exponentially solvable or semisimple groups, motion groups and certain discrete groups). In particular, some of them do not possess a Lie structure. Hopefully, in some subsequent publication, we will restrict to smaller classes of groups allowing a deeper analytical investigation, involving more realistic spaces of functions. For $\G=\R^n$ and for magnetic-type cocycles this has been undertaken in \cite{IMP1,IMP2} and spectral results for magnetic Hamiltonians are contained in \cite{MPR1}.

Let us  describe the content. We start exposing briefly the $C^{*}$-algebraic formalism of  twisted crossed products and its representation theory. The standard version can be found in \cite{BS,PR1}. Actually we present a modified version, containing a parameter $\tau$ connected to ordering issues. Then we recall some basics things about the cohomology of groups $\G$ with coefficients in an Abelian Polish group $\mathbb U$\,. The most interesting case is $\mathbb U=C(\G,\mathbb{T})$, the group of continuous functions on the locally compact group $\G$ with values in the torus; it is related with the unitary multipliers of the $C^{*}$-algebras we deal with. We describe $2$-cocycles and their pseudo-trivializations which are related, in the case of $\R^{n}$, with vector potentials corresponding to a given magnetic field.

We go on with concrete realizations of the crossed products. We consider $C^{*}$-algebras composed of functions which are bounded and uniformly continuous over $\G$ and invariant under translations. Compatible data are defined as couples $(\A,\gamma)$ including a $C^{*}$-algebra $\A$ and a 2-cocycle $\gamma$ of the group $\G$ with values in the unitary of the multipliers of $\A$ such that the triple $(\A,\G,\gamma)$ is a twisted dynamical system.  As mentioned above, a  map $\tau:\G\to\G$ is assumed given, leading to the so called $\tau$-quantizations. For $\tau\equiv {\sf e}$ one obtains the analog of the Kohn-Nirenberg quantization and, for simplicity, the reader could only consider this case. Symmetric Weyl forms are available on certain groups and for certain type of cocycles. We also study the Sch\"odinger representations of the twisted dynamical system, related to the pseudo-trivializations of the cocycle.

In the third section we introduce the twisted pseudo-differential calculus. We begin with notions and results about the Fourier transform on type I groups, referring mainly to \cite{Di}. With the Sch\"odinger representation introduced previously and a partial Fourier transform we define the twisted pseudo-differential operators and examine their integral kernels. By suitable particularizations one gets multiplication as well as twisted convolution operators. The formalism is based on a Fourier-Wigner transform and a twisted Weyl system, involving unitary operators indexed by the points $(x,\xi)\in\G\times\wG$ but acting in different Hilbert spaces $L^2(\G)\otimes\H_\xi$ connected to the irreducible representations. They are built out of two simpler non-commuting families of unitary operators: one of them is associated to "position", while the other generalizes the magnetic translations. For $\R^{n}$ these families correspond to the unitary groups defined by position and momentum operators. We finish this section discussing products formulas and the existence of symmetric (Weyl) twisted quantization, for which the symbol of the adjoint operator is very simple.

In the final section we focus on the case of connected simply connected nilpotent Lie groups. Using the Lie algebra and properties of the exponential map, we show that the quantization can also be realized by scalar symbols defined globally on the cotangent bundle $T(\G)$\,. We also study cocycles which are defined by 2-forms on the Lie algebra, seen as magnetic fields on the Lie group. Various previous results showing the independence of the calculus on the chosen pseudo-trivialization of the $2$-cocycle can be seen here as a gauge invariance principle.

\section{The twisted crossed product formalism}\label{traci}

Let us fix some of our notations. For any (complex, separable) Hilbert space $\H$ one denotes by $\mathbb B(\H)$ the $C^*$-algebra of all linear bounded operators in $\H$\,, by $\mathbb B^2(\H)$ the $^*$-ideal of all Hilbert-Schmidt operators and by $\mathbb K(\H)$ the closed bi-sided $^*$-ideal of all the compact operators. The unitary operators form a group $\mathbb U(\H)$\,.
Let $\G$ be a locally compact second countable unimodular group with unit $\e$\,, fixed left Haar measure $\m$ and associated Lebesgue spaces $L^p(\G)\equiv L^p(\G;\m)$\,.
For a $C^{*}$-algebra $\A$ we denote by ${\sf Aut}(\A)$ the group of all its automorphisms with the strong topology. We also denote by $\M(\A)$ the multiplier algebra and by $\U[\mathcal M(\A)]\equiv\U(\A)$ its group of unitary elements. 

\subsection{$\tau$-twisted crossed products}\label{draci}

For the general theory of twisted $C^*$-dynamical systems and twisted crossed product algebras we refer to \cite{BS,PR1,PR2}. The untwisted case ($\gamma(\cdot,\cdot)=1$) is studied deeply in \cite{Wi}. We will need a modification of the usual definitions, to accommodate later $\tau$-quantizations related to ordering issues; {\it $\tau:\G\to\G$ will always be a continuous map, which does not need to be a group morphism or to commute with inversion}. We shall only consider Abelian $C^{*}$-algebras.

\begin{Definition}\label{bazadata}
We call {\rm twisted $C^*$-dynamical system} a quadruple $(\A,\a,\gamma,\G)$\,, where $\a:\G\to{\sf Aut}(\A)$ is a  strongly continuous action of $\,\G$ in $\A$ and $\gamma:\G\times\G\to\U(\A)$ is a strictly continuous map satisfying for $x\in\G$ the normalization condition $\gamma(x,\e)=1_{\A}=\gamma(\e,x)$
and for every $x,y,z\in\G$ the $2$-cocycle identity
\begin{equation}\label{comit}
\gamma(x,y)\,\gamma(xy,z)=\a_x[\gamma(y,z)]\,\gamma(x,yz)\,.
\end{equation}
\end{Definition}

In the next definition, the case $\tau(\cdot)=\e$ is the one appearing in the literature; the untwisted case including the parameter $\tau$ can be found in \cite[Subsect. 7.1]{MR}.  The reader only interested in the Kohn-Nirenberg version of pseudo-differential operators can stick to this situation; the formulas will look simpler. 

\begin{Definition}
\begin{enumerate}
\item[(i)]
To the twisted $C^*$-dynamical system $(\A,\a,\gamma,\G)$ we associate the Banach $^*$-algebra $L^1(\G;\A)$ (the space of Bochner integrable functions $\G\to\A$ with the obvious norm) with operations
\begin{equation*}\label{diamond}
\big(\Phi\diamond^\tau_\gamma \Psi\big)(x):=\int_{\G}\a_{\tau(x)^{-1}\tau(y)}[\Phi(y)]\,\a_{\tau(x)^{-1}y\tau(y^{-1}x)}\!\left[\Psi(y^{-1}x)\right]\a_{\tau(x)^{-1}}\big[\gamma(y,y^{-1}x)\big]d\m(y)\,,
\end{equation*}
\begin{equation*}\label{willibrant}
\Phi^{\diamond^\tau_\gamma}(x):=\a_{\tau(x)^{-1}}\big[\gamma\big(x,x^{-1}\big)\big]^*\,\a_{\tau(x)^{-1}x\tau(x^{-1})}\!\left[\Phi(x^{-1})^*\right].
\end{equation*}
\item[(ii)]
{\rm The crossed product $C^*$-algebra} $\A\rtimes^\tau_{\a,\gamma}\G:={\rm Env}\!\left[L^1(\G;\A)\right]$ is the enveloping $C^*$-algebra of this Banach $^*$-algebra, i.e its completion in the universal norm $\,\p\!\Phi\!\p_{\rm univ}\,:=\,\sup_{\Pi}\p\!\Pi(\Phi)\!\p_{\mathbb B(\H)}$\,,
where the supremum is taken over all the $^*$-representations $\,\Pi:L^1(\G,\A)\rightarrow\mathbb B(\H)$\,. 
\end{enumerate}
\end{Definition}

The Banach space $L^1(\G;\A)$ can be identified with the projective tensor product $\A\,\overline\otimes\,L^1(\G)$\,, and $\mathcal C_{\rm c}(\G;\A)$\,, the space of all $\A$-valued continuous compactly supported function on $\G$\,, is a dense $^*$-subalgebra. 

\begin{Definition}\label{peter}
Let $(\A,\a,\gamma,\G)$ be a twisted $C^*$-dynamical system. 
\begin{enumerate} 
\item[(i)]
{\rm A covariant representation} is a triple $(\rho,T,\H)$ where $\H$ is a Hilbert space, $\rho:\A\rightarrow\mathbb B(\H)$ is a $^*$-representation,
$T:\G\rightarrow\mathbb U(\H)$ is strongly continuous and satisfies
\begin{equation*}\label{clasa}
T(x)T(y)=\rho[\gamma(x,y)]T(xy)\,,\quad\forall\,x,y\in\G\,,
\end{equation*}
\begin{equation*}\label{holds}
T(x)\rho(a)T(x)^*=\rho\left[\a_x(a)\right]\,,\quad\forall\,a\in\A\,,\,x\in\G\,.
\end{equation*}
\item[(ii)]
{\rm The integrated form} of the covariant representation $(\rho,T,\H)$ is the unique continuous extension $\,\rho\!\rtimes^\tau\!T:\A\!\rtimes_{\a,\gamma}^\tau\!\G\rightarrow\mathbb B(\H)$ of the map defined on $L^1(\G;\A)$ by
\begin{equation*}\label{maximilian}
\big(\rho\!\rtimes^\tau\!T\big)(\Phi):=\!\int_\G\rho\big[\a_{\tau(x)}\big(\Phi(x)\big)\big]T(x)d\m(x)\,.
\end{equation*}
\end{enumerate}
\end{Definition}
 
The covariant representations are in 1-1 correspondence  with representations of the crossed product, one direction being given by the formula above for $\rho\!\rtimes^\tau\!T$\,.

\begin{Remark}\label{tauiso}
{\rm For different maps $\tau$ and $\tau'$ the algebras $\A\!\rtimes_{\a,\gamma}^{\tau'}\!\G$ and $\A\!\rtimes_{\a,\gamma}^{\tau}\!\G$ are isomorphic.  The isomorphism at the level of $L^{1}(\G,\A)$ is given by 
$$\big[\Theta_{\tau,\tau'}(\Phi)\big](x):=\a_{\tau(x)^{-1}\tau'(x)}\[\Phi(x)\] $$  
and it satisfies $\rho\!\rtimes^{\tau'}\!T=\big(\rho\!\rtimes^\tau\!T\big)\circ\Theta_{\tau,\tau'}$ for any covariant representation $(\rho,T,\H)$\,.
}
\end{Remark}

\subsection{Group cohomology}\label{flekarin}

We recall some definitions in group cohomology. Let $\G$ be a locally compact group and ${\cal U}$ a Polish (metrizable separable and complete) Abelian group. We suppose that there exists a continuous action $\a$ of $\G$  by automorphism of ${\cal U}$\,, thus ${\cal U} $ is a right $\G$-module. 

For $n\in\N$, the family of all continuous functions $\G^{n}\to{\cal U}$ is denoted by $C^{n}(\G;{\cal U})$\,. It is an Abelian group with the pointwise multiplication. The elements of  $C^{n}(\G;{\cal U})$ are called {\it $n$-cochains}. Let us define the {\it $n$-coboundary map} $\,\delta^{n}:C^{n}(\G;{\cal U})\to C^{n+1}(\G;{\cal U})$ by 
$$
(\delta^{n}\nu)(x_{1},\cdots,x_{n+1}):= \a_{x_{1}}[\nu(x_{2},\cdots,x_{n+1})]\prod_{j=1}^{n}\nu(x_{1},\cdots,x_{j}x_{j+1},\cdots,x_{n+1})^{(-1)^{j}}\!\nu(x_{1},\cdots,x_{n})^{(-1)^{n+1}}\!.
$$

A straightforward computation shows that $\delta^{n}$ is a group morphism and one has $\delta^{n+1} \circ\delta^{n}=0$\,. 

\begin{Definition}
\begin{enumerate}
  \item[(i)] $Z^{n}(\G;{\cal U}):=\ker(\delta^{n})$ is called the group of \,{\rm $n$-cocycles}.
  \item[(ii)] $B^{n}(\G;{\cal U}):={\rm im}(\delta^{n-1})$ is called the group of \,{\rm $n$-coboundaries}.
  \item[(iii)] The quotient $H^{n}(\G;{\cal U}):=	Z^{n}(\G;{\cal U})/B^{n}(\G;{\cal U})$ is called {\rm the $n$'th cohomology group of $\,\G$ with coefficients in ${\cal U}$} and its elements are {\rm called classes of cohomology}.
\end{enumerate}
\end{Definition}

We review the cases $n=0,1,2$\,. For $n=0$ one has $C^{0}(\G;{\cal U}):={\cal U}$ and 
$$
[\delta^{0}(a)](x)=\a_{x}(a)a^{-1},\quad\forall\,a\in{\cal U}\,,\,x\in\G\,.
$$ 
Thus $Z^{0}(\G;{\cal U})={\cal U}^{\G}$, the set of all fixed points of $\a$\,. By convention $B^{0}(\G;{\cal U})=\{1\}$\,.

The map $\delta^{1}:C^{1}(\G;{\cal U})\to C^{2}(\G;{\cal U})$ corresponds to 
$$
\big[\delta^{1}(\beta)\big](x,y)= \a_{x}[\beta(y)]\beta(x)\beta(xy)^{-1},
$$ 
thus a 1-cochain is a $1$-cocycle satisfying $\beta(xy)=\a_{x}[\beta(y)]\beta(x)$ for any $x,y\in\G$\,. Then $B^{1}(\G;{\cal U})$ consists of elements of the form $\a_{x}(a)a^{-1}$ for $a\in{\cal U}$\,.

For $n=2$ the coboundary map is 
$$
[\delta^{2}(\gamma)](x,y,z)=\a_{x}[\gamma(y,z)]\gamma(xy,z)^{-1}\gamma(x,yz)\gamma(x,y)^{-1}.
$$
Thus the 2-cocycles are exactly the 2-cochains which satisfy the identity (\ref{comit}) and $B^{1}(\G;{\cal U})$ is the set of elements of the form $\a_{x}[\beta(y)]\beta(x)\beta(xy)^{-1}$ for some 1-cochain $\beta$\,.

\begin{Remark}
{\rm
For two $C^{*}$-dynamical systems $(\A,\a,\gamma,\G)$ and $(\A,\a,\gamma',\G)$, one can reinterpret $\gamma,\,\gamma'$  as elements of $Z^{2}\big(\G;\U(\A)\big)$\,. If these cocycles are cohomologous, i.e. $\gamma'=\delta^{1}(\beta)\gamma$ for some 1-cochain $\beta$\,, the $C^{*}$ -algebras $\A\rtimes^{\tau}_{\a,\gamma}\!\G$ and $\A\rtimes^{\tau}_{\a,\gamma'}\!\G$ are isomorphic. To show this, by Remark \ref{tauiso}, is enough to consider the case $\tau\equiv {\sf e}$\,; then one can define the isomorphism on $L^{1}(\G,\A) $ by
$$
\big[\Upsilon_\beta(\Phi)\big](x)=\Phi(x)\beta(x)\,.
$$ 
}
\end{Remark}

Special attention deserves the case $\,{\cal U }=C(\G;\mathbb{T})$\,. This group, with the  topology of uniform convergence on compact sets, is the unitary part of the multiplier algebra of the $C^{*}$-algebra $C_{0}(\G)$ and will play an important role in the next subsection. The action consists of left translations by elements of $\G$ in  $C(\G,\mathbb{T})$\,.

\begin{Proposition}\label{cohomology}
For $n\ge 1$ , $H^{n}\big(\G;C(\G,\mathbb{T})\big)=\{1\}$\,.
\end{Proposition}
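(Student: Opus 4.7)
The plan is to establish acyclicity by constructing, for every continuous $n$-cocycle $\nu \in Z^{n}\bigl(\G; C(\G, \mathbb{T})\bigr)$ with $n \geq 1$, an explicit $(n-1)$-cochain $\beta$ satisfying $\delta^{n-1}\beta = \nu$. The structural reason this is possible is an instance of Shapiro's lemma: under the left translation action, $C(\G,\mathbb{T})$ is the $\G$-module coinduced from the trivial subgroup $\{\e\}\subseteq\G$ with coefficients in the trivial module $\mathbb{T}$, and coinduced modules from the trivial subgroup are cohomologically trivial in positive degrees. Equivalently, one may identify cochains $\nu\colon \G^n\to C(\G,\mathbb{T})$ with arbitrary continuous functions $\widetilde{\nu}\colon \G^{n+1}\to\mathbb{T}$ via $\widetilde{\nu}(x_1,\ldots,x_n;y):=\nu(x_1,\ldots,x_n)(y)$; the complex then carries the familiar ``insert $\e$'' contracting homotopy.

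For a direct presentation, rather than unwinding the abstract formalism, I would simply write down the primitive
$$
\beta(x_1,\ldots,x_{n-1})(y)\,:=\,\nu\bigl(x_1,\ldots,x_{n-1},\,(x_1\cdots x_{n-1})^{-1}y\bigr)(y)^{(-1)^n}.
$$
Joint continuity in $(x_1,\ldots,x_{n-1},y)$ is automatic because group multiplication, inversion, the translation action on $C(\G,\mathbb{T})$ and $\nu$ itself are continuous; hence $\beta\in C^{n-1}\bigl(\G;C(\G,\mathbb{T})\bigr)$. To verify $\delta^{n-1}\beta=\nu$, one writes out the $n$-cocycle identity $\delta^n\nu=1$, evaluates it at $y$, and specializes its last slot to $x_{n+1}=(x_1\cdots x_n)^{-1}y$. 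With that substitution, all factors of $\delta^n\nu$ except the one not involving $x_{n+1}$ (which is $\nu(x_1,\ldots,x_n)(y)^{(-1)^{n+1}}$) reassemble, via the very definition of $\beta$, into the factors of $\delta^{n-1}\beta(x_1,\ldots,x_n)(y)$ raised to the right signs; the sign $(-1)^n$ in $\beta$ is precisely what is needed to match. Low-dimensional sanity checks make the mechanism transparent: for $n=1$, $\beta(y)=\nu(y)(y)^{-1}$ combined with the 1-cocycle identity $\nu(xy)(z)=\nu(x)(z)\,\nu(y)(x^{-1}z)$ specialized at $xy=z$ gives $[\delta^0\beta](x)(z)=\nu(x)(z)$; for $n=2$, $\beta(x_1)(y)=\nu(x_1,x_1^{-1}y)(y)$ and the 2-cocycle identity with $x_3$ set to $x_2^{-1}x_1^{-1}y$ yields $\delta^1\beta=\nu$.

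The main (and really only) obstacle is guessing the correct ``diagonal'' specialization in the last argument of $\nu$ that collapses the cocycle relation to a single surviving factor; once the ansatz is in place, both the continuity of $\beta$ and the algebraic verification are mechanical. No analytic subtleties intervene, since $\mathbb{T}$ is abelian and all the manipulations are pointwise in $y$.
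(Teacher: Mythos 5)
Your proof is correct, and the underlying mechanism is the same as the paper's: $C(\G,\mathbb{T})$ with the left-translation action is a coinduced module (Shapiro-type acyclicity), and one proves vanishing by writing down an explicit chain contraction for the inhomogeneous bar complex. The concrete primitive differs, however. The paper inserts a new variable at the \emph{front} and evaluates at the identity,
\[
\big[\nu^{n-1}(z_{1},\dots,z_{n-1})\big](x) := \big[\nu^{n}(x^{-1},z_{1},\dots,z_{n-1})\big](\e),
\]
which directly isolates the term $\a_{x_1}[\nu^n(x_2,\dots,x_{n+1})]$ in the cocycle identity after the substitution $x_1 \mapsto x_1^{-1}$, $x \mapsto \e$, with no sign corrections. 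You instead insert at the \emph{back} and evaluate at the variable $y$ of the module,
\[
\beta(x_1,\dots,x_{n-1})(y) := \nu\big(x_1,\dots,x_{n-1},(x_1\cdots x_{n-1})^{-1}y\big)(y)^{(-1)^n},
\]
which requires the compensating exponent $(-1)^n$ (since the term of $\delta^n\nu$ that survives untouched after the specialization $x_{n+1} = (x_1\cdots x_n)^{-1}y$ carries the sign $(-1)^{n+1}$). Both are standard ``insert-$\e$'' contracting homotopies, essentially the two ends of the bar resolution; the paper's front-insertion version is marginally cleaner because it avoids the sign, and because the last argument $(x_1\cdots x_{n-1})^{-1}y$ in your formula twists the order in which each factor of $\delta^{n-1}\beta$ matches a factor of $\delta^n\nu$. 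Your verification that the products $x_1\cdots x_j x_{j+1}\cdots x_n$ are invariant under the merging $x_j x_{j+1}$ is the key algebraic observation that makes the back-insertion homotopy close up; you gesture at this with ``reassemble'' but it is worth stating explicitly. The continuity argument is also fine: continuity of $\nu\colon\G^n\to C(\G,\mathbb{T})$ in the compact-open topology is equivalent, by local compactness, to joint continuity of the adjoint map $\G^n\times\G\to\mathbb{T}$, so composing with the continuous change of variables $(x_1,\dots,x_{n-1},y)\mapsto(x_1,\dots,x_{n-1},(x_1\cdots x_{n-1})^{-1}y,y)$ keeps $\beta$ a continuous cochain; this is exactly what the paper invokes when it identifies $C^{n-1}(\G;C(\G;\mathbb{T}))$ with $C(\G\times\G^{n-1};\mathbb{T})$.
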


\begin{proof}
Any $\,\nu^{n}\in Z^{n}\big(\G;C(\G,\mathbb{T})\big)$ satisfies the identity
$$
\a_{x_{1}}[\nu^{n}(x_{2},\cdots,x_{n+1})]\prod_{j=1}^{n}
\nu^{n}(x_{1},\cdots,x_{j}x_{j+1},\cdots,x_{n+1})^{(-1)^{j}} \nu^{n}(x_{1},\cdots,x_{n})^{(-1)^{n+1}}=1
$$
for all $x_{1},\cdots,x_{n+1}\in\G$\,, which can be transformed into
$$
\a_{x_{1}}[\nu^{n}(x_{2},\cdots,x_{n+1})]=\nu^{n}(x_{1}x_{2},\cdots,x_{n+1})\prod_{j=1}^{n-1} \nu^{n}(x_{1},\cdots,x_{j}x_{j+1},\cdots,x_{n})^{(-1)^{j}}\nu^{n}(x_{1},\cdots, x_{n})^{(-1)^{n}}.
$$
Changing $x_1$ into $x_1^{-1}$ and evaluating at $x={\sf e}$ in both sides of the equality one gets
$$
\begin{aligned}
\big[\nu^n(x_{2},\cdots,x_{n+1})\big](x_{1})&=\big[\nu^n(x_{1}^{-1}x_{2},\cdots,x_{n+1})\big]({\sf e})\\
&\cdot\prod_{j=1}^{n-1} \big[\nu^{n}(x_{1}^{-1},\cdots,x_{j},x_{j+1},\cdots,x_{n})^{(-1)^{j}}\big]({\sf e})\big[\nu^{n}(x_{1}^{-1},\cdots, x_{n})^{(-1)^{n}}\big]\!({\sf e})\,,
\end{aligned}
$$
since $\a$ consists of left translations. The previous equation means that $\,\nu^{n}=\delta^{n-1}(\nu^{n-1})$\,, where 
\begin{equation}\label{miss}
\big[\nu^{n-1}(z_{1},\cdots,z_{n-1})\big](x)=\big[\nu^{n}(x^{-1},z_{1},\cdots,z_{n-1})\big]({\sf e})\,.
\end{equation}
This shows that any cocycle is a coboundary algebraically. 

We also need to prove that $\nu^{n-1}\in C^{n-1}(\G;C(\G;\mathbb{T}))$\,. This follows easily from the identification between $C^{n-1}(\G;C(\G;\mathbb{T}))$ and $C(\G\times\G^{n-1};\mathbb{T})$ and the explicit definition of the map $\nu^{n-1}$.
\end{proof}

The situation described in the next definition will be relevant below.

\begin{Definition}\label{grostie}
Let ${\cal U}$ be an Abelian Polish $\G$-module with action $\a$\,. An element $\gamma\in Z^{2}(\G;{\cal U})$ is said to be  {\rm pseudo-trivializable} if there is a $\G$-module ${\cal U}'$ with action $\a'$ such that ${\cal U}$ is a subgroup of ${\cal U}'$, $\a'_x|_{{\cal U}}=\a_x$ for every $x\in\G$ and $\gamma\in B^{2}(\G;{\cal U}')$\,.
\end{Definition}

\subsection{Concrete twisted crossed product $C^*$-algebras}\label{flegarin}

Let $\G$ a locally compact unimodular group.   By ${\sf Aut}(\G)$ we denote the group of all its (continuous) automorphisms. Let us denote the left and right actions ${\sf l},{\sf r}:\G\to{\sf Aut}(\G)$ of $\G$ on itself by
\begin{equation}\label{lef}
{\sf l}_y(x):=y^{-1}x\,,\quad{\sf r}_y(x):=xy\,.
\end{equation}
They commute: ${\sf l}_y{\sf r}_z{=\sf r}_z{\sf l}_y$ for every $y,z\in\G$\,, and are transitive. In addition, they induce actions on the $C^*$-algebra $\mathcal C_b(\G)$ of all bounded continuous complex functions on $\G$\,. To get pointwise continuous actions we restrict, respectively, to bounded left or right uniformly continuous functions:
\begin{equation}\label{lefd}
{\sf L}:\G\to{\sf Aut}\big[\mathcal{LUC}(\G)\big]\,,\quad\big[{\sf L}_y(c)\big](x):=\big(c\circ{\sf l}_y\big)(x)=c(y^{-1}x)\,,
\end{equation}
\begin{equation}\label{righd}
{\sf R}:\G\to{\sf Aut}\big[\mathcal{RUC}(\G)\big]\,,\quad\big[{\sf R}_y(c)\big](x):=\big(c\circ{\sf r}_y\big)(x)=c(xy)\,.
\end{equation}

A $C^*$-subalgebra $\A$ of $\mathcal{LUC}(\G)$ is called {\it left-invariant} if ${\sf L}_y\A\subset\A$ for every $y\in\G$\,; {\it right-invariance} is defined analogously for $C^*$-subalgebras of $\mathcal{RUC}(\G)$\,. Then $C_0(\G)$\,, the $C^*$-algebra of continuous complex functions which decay at infinity (arbitrarily small outside arbitrarily large compact subsets), is an invariant ideal, both to the left and to the right. We also denote by $C_{c}(\G)$ the *-algebra of complex continuous compactly supported functions.

\begin{Definition}\label{barzadata}
We call {\rm basic data} a pair $(\A,\gamma)$\,, where $\A$ is a left-invariant $C^*$-subalgebra of $\mathcal{LUC}(\G)$\,,
$\gamma:\G\times\G\times\G\to\mathbb T$ is a continuous map satisfying for every $x,q\in\G$ the normalization condition
\begin{equation*}\label{normaz}
\gamma(q;x,\e)=1=\gamma(q;\e,x)
\end{equation*}
and for every $x,y,z,q\in\G$ the $2$-cocycle identity
\begin{equation}\label{comits}
\gamma(q;x,y)\,\gamma(q;xy,z)=\gamma(x^{-1}q;y,z)\,\gamma(q;x,yz)\,.
\end{equation}
\end{Definition}

Note that $C(\G;\T)$ is the unitary group of the $C^*$-algebra $\mathcal{LUC}(\G)$\,. Thus  one can regard $\gamma$ as a function 
\begin{equation}\label{reinterp}
\gamma:\G\times\G\to C(\G;\T)\equiv\U[\mathcal{LUC}(\G)]\,,\quad \gamma(q;x,y):= [\gamma(x,y)](q)\,.
\end{equation}
We say that $(\A,\gamma)$ is {\it a compatible basic data} if $\gamma(x,y)\in\U(\A)\,,\ \forall\,x,y\in\G$\,.
Then \eqref{comits} can be rewritten as $\delta^2(\gamma)=1$ and $(\A,{\sf L},\gamma,\G)$ will be called {\it a concrete twisted $C^*$-dynamical system}. 

For $\A$-valued functions $\Phi$ defined on $\G$ and for elements $x,q$ of the group, we are going to use notations as $[\Phi(x)](q)=\Phi(q;x)$\,, interpreting $\Phi$ as a function of two variables.  We rewrite the general formulas defining the $\tau$-twisted crossed product in a more concrete form:
\begin{equation*}\label{ulrich}
\p\!\Phi\!\p_{(1)}\,=\int_{\G}\underset{q\in\G}{\rm ess\ sup}|\Phi(q;x)|\,d\m(x) \,,
\end{equation*}
\begin{equation*}\label{tiamond}
(\Phi\diamond^{\tau}_{\gamma} \Psi)(q;x)=\int_{\G}\!\Phi\(\tau(y)^{-1}\tau(x)q;y\)
\Psi\big(\tau(y^{-1}x)^{-1}y^{-1}\tau(x)q;y^{-1}x\big)\gamma\big(\tau(x)q;y,y^{-1}x\big)\,d\m(y)\,,
\end{equation*}
\begin{equation*}\label{villibrant}
\Phi^{\diamond^{\tau}_{\gamma}}(q;x)=\overline{\gamma\big(\tau(x)q;x,x^{-1}\big)}\;\overline{\Phi(\tau(x^{-1})^{-1} x^{-1}\tau(x)q;x^{-1})}\,.
\end{equation*}
The space $L^{1}(\G,\A) $ is a Banach ${}^*$-algebra; its enveloping $C^*$-algebra $\A\!\rtimes_{{\sf L},\gamma}^{\,\tau}\!\G$ will be denoted by $\mathfrak C^{\tau}\!(\A,\gamma)$\,.

\medskip
In our case $\A$ is a left-invariant $C^*$-algebra of functions on $\G$ and this has remarkable consequences.

\begin{Proposition}\label{zischen}
Let $\A$ be a left invariant $C^*$-algebra of left uniformly continuous functions and $\,\U(\A)$ the Abelian Polish $\G$-module formed of all the unitary elements of the multiplier $C^*$-algebra of $\A$\,. 
\begin{enumerate}
\item[(i)]
Any $2$-cocyle $\,\gamma:\G\times\G\to\U(\A)$ is pseudo-trivializable.
\item[(ii)]
If $\,\beta_1,\beta_2:\G\to C(\G;\T)$ are two pseudo-trivializations of $\,\gamma$\,, there exists a unique $a\in C(\G;\T)$ such that $\beta_2=\delta^0(a)\beta_1$\,.
\end{enumerate}
\end{Proposition}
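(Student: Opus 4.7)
The plan is to derive both statements as direct consequences of Proposition \ref{cohomology} applied to the natural Polish $\G$-module $\,C(\G;\T) = \U[\mathcal{LUC}(\G)]$\,, into which $\U(\A)$ embeds as a $\G$-submodule (the left-invariance of $\A$ guarantees that the left-translation action $\sf L$ on $\mathcal{LUC}(\G)$ restricts to the given action on $\U[\M(\A)]=\U(\A)$).

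For part (i), I view the given $2$-cocycle $\gamma:\G\times\G\to\U(\A)$ as a $2$-cocycle with values in the larger module $\U' := C(\G;\T)$. The cocycle identity \eqref{comits} is preserved by this embedding, so $\gamma\in Z^2(\G;C(\G;\T))$. Proposition \ref{cohomology} with $n=2$ then yields $\gamma=\delta^1(\beta)$ for some $\beta\in C^1(\G;C(\G;\T))$, which is exactly a pseudo-trivialization in the sense of Definition \ref{grostie}. (Concretely, the formula \eqref{miss} gives the explicit candidate $\beta(x)(q):=\gamma(q^{-1};x^{-1})(\e)\,$; one may also just invoke the proposition as a black box.)

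For part (ii), suppose $\delta^1(\beta_1)=\gamma=\delta^1(\beta_2)$. Since $C(\G;\T)$ is Abelian and $\delta^1$ is a group morphism, $\delta^1(\beta_2\beta_1^{-1})=1$, i.e.\ $\beta_2\beta_1^{-1}\in Z^1(\G;C(\G;\T))$. By Proposition \ref{cohomology} with $n=1$, this $1$-cocycle is a coboundary: there is $a\in C(\G;\T)$ with $\beta_2\beta_1^{-1}=\delta^0(a)$, which is the claimed identity. For uniqueness, if $\delta^0(a)=\delta^0(a')$ then $\delta^0(a/a')=1$, meaning $\sf{L}_x(a/a')=a/a'$ for all $x\in\G$; since $\sf L$ is transitive, $a/a'$ must be a constant in $\T$, so $a$ is uniquely determined up to multiplication by an element of $\T$ (which is the natural notion of uniqueness here, as scalar constants lie in the kernel of $\delta^0$; alternatively one fixes uniqueness by normalizing $a(\e)=1$).

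There is essentially no obstacle in this proof: the heavy lifting has already been done in Proposition \ref{cohomology}. The only point requiring a small comment is checking that the inclusion $\U(\A)\hookrightarrow C(\G;\T)$ is compatible with the $\G$-actions, which is immediate from the fact that the action of $\G$ on $\U(\A)$ is by definition the restriction of left translations, and the mild issue of interpreting ``uniqueness'' in (ii), which should be read modulo the kernel of $\delta^0$ on constants.
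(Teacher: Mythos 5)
Your argument is essentially the paper's own: part (i) applies Proposition~\ref{cohomology} with $n=2$ after enlarging the coefficient module to ${\cal U}'=C(\G;\T)$, and part (ii) reduces to $\delta^1(\beta_2\beta_1^{-1})=1$ and applies the case $n=1$, each time reading the explicit pseudo-trivialization off~\eqref{miss}, exactly as the paper does. Your remark on uniqueness is also correct and in fact sharpens the paper, which asserts uniqueness in the statement but never argues it in the proof: since $\delta^0$ annihilates precisely the constant $\T$-valued functions (by transitivity of the left action), $a$ is determined only modulo a unimodular constant, so the word \emph{unique} should be read as \emph{unique up to a constant in $\T$} (or fixed by the normalization $a(\e)=1$) --- a harmless overstatement downstream since $M_a$ only ever enters as $M_a^*(\cdot)M_a$.
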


\begin{proof}
(i) It is a consequence of the case $n=2$ of Proposition \ref{cohomology}, using the larger $\G$-module $\,{\cal U}'=C(\G;\mathbb{T})$\,, also with the left translation action of $\,\G$\,. Note that a pseudo-trivialization $\beta$ can be defined by
\begin{equation}\label{given}
\beta_\gamma:\G\times\G\to\mathbb T\,,\quad\beta_\gamma(q;x):=\gamma\big(\e;q^{-1}\!,x\big)\,.
\end{equation}

\medskip
(ii) Working with $\,{\cal U}'=C(\G;\mathbb{T})$\,, one has $\,\delta^1(\beta_1)=\delta^1(\beta_2)$ if and only if $\,\delta^1\big(\beta_2\beta_1^{-1}\big)=1$ (pointwise operations). Thus it is enough to show that $\delta^1(\beta)=1$ if and only if $\beta=\delta^0(a)$ for some $a\in C(\G;\T)$\,.  

The "if\," part is a direct verification relying on the definitions of $\delta^0$ and $\delta^1$. The "only if\," part is a direct consequence of Proposition \ref{cohomology}: use \eqref{miss} for the case $n=1$ to obtain 
$$
a(q):=\beta\big(\e;q^{-1}\big)\,,\quad q\in\G\,.
$$
\end{proof}

Thus for a  compatible basic data $(\A,\gamma)$ the cocycle is pseudo-trivial.  However, in most cases $\beta_\gamma$ is not $\U(\A)$-valued, and in certain situations it is essential to keep track of the $\A$-behavior. 

\medskip
Let $\beta$ be a pseudo-trivialization of the $2$-cocycle $\gamma$\,, i.e. $\beta$ is a continuous map from $\G\times\G$ to $\T$ satisfying
\begin{equation}\label{vrumushel}
\gamma(q;y,z)=\beta(q;y)\beta(y^{-1}q;z)\beta(q;yz)^{-1}\,,\quad\forall\,q,y,z\in\G\,.
\end{equation} 
Then we have a natural covariant representation $\big(\rho,T_\beta,\H\big)$\,, called {\it the Schr\"odinger representation} defined by $\beta$, given in $\mathcal H:=L^2(\G)$ by
\begin{equation*}\label{razvan}
\[T_\beta(y)u\]\!(q):=\beta(q;y)u\!\(y^{-1}q\)\,,\ \quad \rho(a)u:=a u\,.
\end{equation*}
Applying Definition \ref{peter}, the integrated form $\Sch^\tau_\beta:=\rho\rtimes^{\tau} T_\beta$ is given for $\Phi\in L^1(\G;\A)$\,, $u\in L^2(\G)$ by 
\begin{equation}\label{rada}
\begin{aligned}
\[\Sch^\tau_\beta(\Phi)u\]\!(q)=&\int_\G\beta(q;z)\Phi\big(\tau(z)^{-1}q;z\big)u(z^{-1}q)\,d\m(z)\\
=&\int_\G\beta(q;qy^{-1})\Phi\!\left(\tau(qy^{-1})^{-1}q;qy^{-1}\right)\!u(y)\,d\m(y)\,. 
\end{aligned}
\end{equation}
It is convenient to rewrite the previous formula as an integral operator. To do this we define 
\begin{equation*}\label{ruj}
[{\sf Int}(M)u](x):=\int_\G M(x,y)u(y)d\m(y)
\end{equation*}
and the maps (changes of variables) $\,\c,\,\v^{\tau}:\G\times\G\to\G\times\G$ given by
\begin{equation}\label{genge}
\c(q,x):=\big(q,qx^{-1}\big)\,,\quad\v^{\tau}(q,x):=\big(\tau(x)^{-1}q,x\big)\,.
\end{equation}
With this one can rewrite 
\begin{equation}\label{zurprise}
\Sch^\tau_\beta\!={\sf Int}\circ\C \circ M_\beta\circ \V^{\tau},
\end{equation}
where $M_\beta$ is the operation of multiplication with $\beta$\,, considering $\beta$ as a map acting in two variables. The  operators $\C,\,\V^{\tau}$ are compositions with the maps $\c$ and $\v^{\tau}$ respectively. For different $\tau$'s one has
\begin{equation}\label{schrtau}
\Sch^{\tau'}_\beta\!=\Sch^\tau_\beta\circ(\V^{\tau})^{-1}\!\circ\V^{\tau'}\,.
\end{equation}

\begin{Remark}
{\rm If $\tau(\cdot)=\e$ the operator $\V^{\tau}$ is the identity. Also, if $\beta\equiv 1$\,, (\ref{zurprise}) can be rewritten as $\Sch^\tau_\beta\!={\sf Int}\circ\C \circ \V^{\tau}$. The map $\C\circ\V^{\tau}$ can be seen in $L^{2}(\G\times\G)$ as the composition  with the change of variable
$(q,x)\mapsto(\tau(x)^{-1}q,qx^{-1})$\,. Thus $\C\circ\V^{\tau}$ can be identified with the operator ${\rm CV}^{\tau}$ of \cite{MR}.
}
\end{Remark}

The following proposition can be seen as a sort of covariance under the pseudo-trivialization choice.

\begin{Proposition}\label{tarnafas}
Let $(\A,\gamma)$ be a compatible basic data and $\beta,\beta':\G\to C(\G;\T)$ two pseudo-trivializations of $\,\gamma$\,, connected as in Proposition \ref{zischen} by $\,\beta'=\delta^0(a)\beta$\,. Define the unitary operator $M_a:L^2(\G)\to L^2(\G)$ of multiplication by $a$\,. 
For every $\Phi\in \mathfrak C^\tau\!(\A,\gamma)$ one has
\begin{equation}\label{firsta}
{\sf Sch}^{\tau}_{\beta'}(\Phi)=M_a^*\,\Sch^\tau_\beta(\Phi)\,M_a\,.
\end{equation}
\end{Proposition}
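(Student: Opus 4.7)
The plan is to verify the identity at the level of covariant representations and then transfer it to their integrated forms via the 1-1 correspondence in Definition \ref{peter}. I will show that the unitary $M_a$ intertwines the covariant representation $(\rho,T_{\beta'},L^2(\G))$ with $(\rho,T_\beta,L^2(\G))$; once this is established, \eqref{firsta} follows automatically from the integral definition of $\rho\!\rtimes^\tau\!T$.

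The intertwining has two components. First, since $\rho$ acts by multiplication operators from $\A$, it commutes with $M_a$, giving $M_a^*\rho(f)M_a=\rho(f)$ trivially. Second, recalling that the $\G$-action on $C(\G;\T)$ is by left translations, the coboundary unfolds as $[\delta^0(a)](y)(q)=a(y^{-1}q)\,\overline{a(q)}$, so the hypothesis $\beta'=\delta^0(a)\beta$ becomes the pointwise identity
\[
\beta'(q;y)=a(y^{-1}q)\,\overline{a(q)}\,\beta(q;y).
\]
Substituting this into the defining formula for $T_{\beta'}$ and comparing with the direct computation $\big[M_a^*T_\beta(y)M_a u\big]\!(q)=\overline{a(q)}\,\beta(q;y)\,a(y^{-1}q)\,u(y^{-1}q)$ yields $T_{\beta'}(y)=M_a^*T_\beta(y)M_a$ for every $y\in\G$.

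Finally, for $\Phi\in L^1(\G;\A)$ one can pull $M_a$ and $M_a^*$ out of the Bochner integral in Definition \ref{peter}(ii), obtaining \eqref{firsta} at once on the dense subalgebra $L^1(\G;\A)\subset\CC^\tau(\A,\gamma)$. Both sides are continuous in $\Phi$ (the left as a $^*$-representation, the right as conjugation by a fixed unitary), so the identity extends to the full crossed product. I do not anticipate any genuine obstacle; the only point requiring care is getting the multiplicative conventions for $\delta^0$ straight (noting that $\U(\mathcal{LUC}(\G))=C(\G;\T)$ and that $\a$ is left translation), after which the argument reduces to the one-line intertwining computation above combined with the standard functoriality of the integrated form.
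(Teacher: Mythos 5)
Your proof is correct and takes essentially the same route as the paper's: you compute the unitary intertwining $T_{\beta'}(y)=M_a^*T_\beta(y)M_a$ from the coboundary relation, observe that $M_a$ commutes with $\rho$, pull $M_a^*$ and $M_a$ out of the integral defining $\rho\rtimes^\tau T$ on $L^1(\G;\A)$, and extend by continuity to the enveloping $C^*$-algebra. The only cosmetic difference is that the paper invokes the universal property of the enveloping $C^*$-algebra where you invoke norm-continuity of both sides, but these are the same extension argument.
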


\begin{proof}
Since $\Sch^\tau_\beta=\rho\rtimes^\tau\!T_\beta$\,, we first compute for $u\in L^{2}(\G)$ and $y\in\G$
\begin{eqnarray*}
\big[T_{\beta'}(y)u\big](q) &=& [\delta^0(a)\beta](q;y)u\!\(y^{-1}q\)  \\
&=& a(q)^{-1}\beta(q;y)a(y^{-1}q)u\!\(y^{-1}q\) \\
&=& \big[M_{a}^*\,T_\beta(y) M_a u\big](q)\,.
\end{eqnarray*}
Clearly, $M_{a}$ commutes with $\rho(b)$ for every $b\in\A$\,. Then, by the explicit form of $\Sch^\tau_\beta(\Phi)$\,, (\ref{firsta}) holds for $\Phi\in L^{1}(\G,\A)$\,. By the universal property of the enveloping procedure, it also holds for $\Phi\in \mathfrak C^\tau\!(\A,\gamma)$\,.
\end{proof}

\section{Twisted pseudodifferential operators}\label{firtanunus}

\subsection{The non-commutative Fourier transform}\label{frefelinc}

To switch to the setting of pseudo-differential operators, we need more assumptions on the group $\G$\,, allowing a manageable Fourier transformation. We refer to  \cite{Di,Fo1} for a systematic presentation of the Harmonic Analysis concepts briefly outlined below. 

We set $\,\wG:={\rm Irrep(\G)}/_{\cong}$ for {\it the unitary dual of $\,\G$}\,, composed of unitary equivalence classes of strongly continuous irreducible Hilbert space representations $\pi:\G\rightarrow\mathbb U(\H_\pi)$\,.  There is a standard Borel structure on $\wG$\,, called {\it the Mackey Borel structure} \cite[18.5]{Di}.  The unitary dual $\wG$ is also a separable locally quasi-compact topological space \cite[18.1]{Di}. If $\G$ is Abelian, $\wG$ is the Pontryagin dual group; if not, $\wG$ is not a group.
We denote by $C^*(\G)$ the full (universal) $C^*$-algebra of $\G$ and by $C^*_{\rm red}(\G)\subset\mathbb B\big[L^2(\G)\big]$ its reduced $C^*$-algebra. Any representation $\pi$ of $\G$ generates canonically a non-degenerate representation $\Pi$ of $C^*(\G)$\,.

\begin{Definition}\label{uja}
The locally compact group $\G$ is {\rm type I} if for every irreducible representation $\pi$ one has $\mathbb K(\H_\pi)\subset\Pi\big[C^*(\G)\big]$\,.
It will be called {\rm admissible} if it is second countable, type I and unimodular.
\end{Definition}

For  the concept of {\it type I group} and for examples we refer to \cite{Di,Fo1,Fu} or \cite[Sect. 2]{MR}.  The main consequence of this property is the existence of a measure on the unitary dual $\wG$ for which a Plancherel Theorem holds. This is {\it the Plancherel measure associated to $\m$}\,, denoted by $\wm$ \cite[18.8]{Di}. 

It is known that there is a $\wm$-measurable field $\big\{\,\H_\xi\mid\xi\in\wG\,\big\}$ of Hilbert spaces  and a measurable section $\wG\ni\xi\mapsto\pi_\xi\in{\rm Irrep(\G)}$ such that each $\pi_\xi:\G\rightarrow\mathbb B(\H_\xi)$ is an irreducible representation belonging to the class $\xi$\,. By a systematic abuse of notation, instead of $\pi_\xi$ we will write $\xi$\,, identifying irreducible representations (corresponding to the measurable choice) with elements of $\wG$\,.  

The Fourier transform \cite[18.2]{Di} of $u\in L^1(\G)$ is defined as
\begin{equation*}\label{ion}
({\fscr F}u)(\xi)\equiv \widehat{u}(\xi):=\int_\G u(x)\xi(x)^*d\m(x) \in\mathbb B(\H_\xi)\,.
\end{equation*} 
It defines an injective linear contraction ${\fscr F}:L^1(\G)\rightarrow \mathscr B(\wG)$\,, where $\mathscr B(\wG):=\int^\oplus_{\wG}\mathbb B(\H_\xi)d\wm(\xi)$ is a direct integral von Neumann algebra. One also introduces the direct integral Hilbert space 
\begin{equation*}\label{andy}
\mathscr B^2(\wG):=\int_{\wG}^\oplus\!\B^2(\H_\xi)\,d\wm(\xi)\,\cong\int_{\wG}^\oplus\!\H_\xi\otimes\overline\H_\xi\,d\wm(\xi)\,,
\end{equation*}
with the scalar product
\begin{equation*}\label{justin}
\<\phi_1,\phi_2\>_{\mathscr B^2(\wG)}:=\int_{\wG}\,\<\phi_1(\xi),\phi_2(\xi)\>_{\mathbb B^2(\H_\xi)}d\wm(\xi)=\int_{\wG}{\rm Tr}_\xi\!\[\phi_1(\xi)\phi_2(\xi)^*\]d\wm(\xi)\,,
\end{equation*}
where ${\rm Tr}_\xi$ is the usual trace in $\mathbb B(\H_\xi)$\,.
A generalized form of Plancherel's Theorem \cite{Di,Fo,Fu} states that {\it the Fourier transform ${\fscr F}$ extends from $L^1(\G)\cap L^2(\G)$ to a unitary isomorphism 
${\fscr F}:L^2(\G)\rightarrow \mathscr B^2(\wG)$}\,. The explicit formula of the inverse of the Fourier transform is given by
\begin{equation}\label{marian}
\big({\fscr F}^{-1}\phi\big)(x)= \int_{\wG} {\rm Tr}_\xi\!\[\xi(x)\phi_1(\xi)\]d\wm(\xi)
\end{equation}
for $\phi\in\mathscr B^2(\wG)\cap\mathscr B^1(\wG)$\,, where $\mathscr B^1(\wG)$ denotes the space of sections $\phi$ with $\int_{\wG}{\rm Tr}_\xi\big[|\phi(\xi)|\big]\,d\wm(\xi)<\infty\,$. 

Below we will also need the notations $\,\Gamma:=\G\times\wG$ and $\,\widehat\Gamma:=\wG\times\G$ (in general they are not dual to each other). We also introduce the Hilbert space tensor products 
\begin{equation}\label{recal}
\mathscr B^2(\Gamma):=L^2(\G)\otimes\mathscr B^2(\wG)\,,\quad\mathscr B^2(\widehat\Gamma):=\mathscr B^2(\wG)\otimes L^2(\G)\,.
\end{equation}
The notations are also justified by the fact that these spaces can be written as direct integrals in an obvious way, over $\Gamma$ or $\widehat\Gamma$ respectively.

\subsection{The twisted $\tau$-quantization}\label{frefelin}

Assuming that $\G$ is an admissible group, we reconsider a compatible data $(\A,\gamma)$ with a pseudo-trivialization $\beta$ and the associated Schr\"odinger representation $\Sch^{\tau}_{\beta}$\,. We already mentioned that $L^1(\G;\A)$ can be identified with the completed projective tensor product $\A\,\overline\otimes\,L^1(\G)$\,. Then, by \cite[Ex.\! 43.2]{Tr}, one gets a linear continuous injection 
\begin{equation*}\label{absenta}
{\sf id}_\A\,\overline\otimes\,{\fscr F}:\A\,\overline\otimes\,L^1(\G)\rightarrow\A\,\overline\otimes\,\mathscr B(\wG)
\end{equation*}
and endows the image space $\big({\sf id}_\A\,\overline\otimes\,{\fscr F}\big)\big[\A\,\overline\otimes\,L^1(\G)\big]$ with the Banach $^*$-algebra structure transported from $L^1(\G;\A)\cong \A\,\overline\otimes\,L^1(\G)\,$ through $\,{\sf id}_\A\,\overline\otimes\,{\fscr F}$.  Explicitly one define on the space $\big({\sf id}_\A\,\overline\otimes\,{\fscr F}\big)\!\[\A\,\overline\otimes\,L^1(\G)\]$ 
\begin{equation}\label{ute}
f\#^\tau_\gamma\,g:=\({\sf id}_\A\,\overline\otimes\,{\fscr F}\)\!\big\{\big(\[{\sf id}_\A\,\overline\otimes\,{\fscr F}\]^{-1}\!f \big)\di^\tau_{\gamma}\!\big(\[{\sf id}_\A\,\overline\otimes\,{\fscr F}\]^{-1}\!g\big)\big\}\,,
\end{equation} 
 as well as the involution 
\begin{equation}\label{nicoleta}
f^{\#^\tau_\gamma}\!:=\[{\sf id}_\A\,\overline\otimes\,{\fscr F}\]\[(\[{\sf id}_\A\,\overline\otimes\,{\fscr F}\]^{-1}f)^{\di^\tau_{\gamma}}\]\,.
\end{equation} 
Then the space $({\sf id}_\A\,\overline\otimes\,{\fscr F})\[\A\,\overline\otimes\,L^1(\G)\]$ is a Banach *-algebra with the norm 
\begin{equation*}\label{sharpnorm}
\|f\|_{\#}:=\|\({\sf id}_\A\,\overline\otimes\,{\fscr F}\) ^{-1}f\,\|_{\mathfrak C^{\tau}\!(\A,\gamma)}\,.
\end{equation*}

We denote by $\mathfrak B^{\tau}\!(\A,\gamma)$ the enveloping $C^*$-algebra of the Banach $^*$-algebra $({\sf id}_\A\,\overline\otimes\,{\fscr F})\big[\A\,\overline\otimes\,L^1(\G)\big]$\,. Recall that we have denoted by $\mathfrak C^{\tau}\!(\A,\gamma)$ the enveloping $C^*$-algebra $\A\!\rtimes_{{\sf L},\gamma}^\tau\!\G$ of the Banach $^*$-algebra $L^1(\G;\A)$\,. By the universal property of the enveloping functor, ${\sf id}_\A\,\overline\otimes\,{\fscr F}$ extends to an isomorphism $\,\mathfrak F_\A:\mathfrak C^\tau\!(\A,\gamma)\rightarrow\mathfrak B^\tau\!(\A,\gamma)$\,. 

\medskip
Composing $\Sch^\tau_\beta$ with the inverse partial Fourier transform, we get a pseudo-differential realisation:
\begin{equation}\label{surprize}
\Op^\tau_\beta:=\Sch^\tau_\beta\!\circ({\sf id}\otimes{\fscr F})^{-1}={\sf Int}\circ\C\circ  M_\beta\circ\V^{\tau}\circ\big({\sf id}\otimes{\fscr F}^{-1}\big)\,.
\end{equation}
By extension $\,\Op^\tau_\beta:=\Sch^\tau_\beta\circ\mathfrak F_\A^{-1}\,$ defines a $^*$-representation of $\mathfrak B^\gamma_{\!\A}$ in the Hilbert space $L^2(\G)$\,. Explicitly
\begin{equation}\label{bilfret}
\[\Op^\tau_\beta(f)u\]\!(q)=\int_\G\!\int_{\wG}\,\beta(q;qy^{-1}){\rm Tr}_\xi\Big[\xi(qy^{-1})f \big(\tau(qy^{-1})^{-1}q,\xi\big)\Big]u(y)d\m(y)d\wm(\xi)\,.
\end{equation}
The formula \eqref{bilfret} is rigorously correct if, for instance, the symbol $f$ belongs to $({\sf id}\otimes{\fscr F})[C_{\rm c}(\G\times\G)]$\,, since the explicit form \eqref{marian} of the inverse Fourier transform holds on ${\fscr F}[C_{\rm c}(\G)]\subset\mathscr B^1(\wG)\cap\mathscr B^2(\wG)$\,. Such type of operators appear in \cite{MR} only for the untwisted case $\beta(\cdot;\cdot)=1$\,.

The following consequence of the Proposition \ref{tarnafas} describes the dependence on the pseudo-trivialization.

\begin{Proposition}\label{opbeta}
Let $(\A,\gamma)$ be a compatible basic data and $\beta,\beta':\G\to C(\G;\T)$ two pseudo-trivializations of $\gamma$\, such that $\,\beta'=\delta^0(a)\beta\,$ (cf. Prop. \ref{zischen} (ii))\,. If the group $\G$ is admissible, for each $f\in\mathfrak B^\gamma_\A$ one has
\begin{equation*}\label{seca}
\Op^\tau_{\beta'}(f)=M_a^*\,\Op^\tau_\beta(f)\,M_a\,,
\end{equation*}
where $M_{a}$ is the operator multiplication by $a$ (as in Proposition  \ref{tarnafas}).
\end{Proposition}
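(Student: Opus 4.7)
The plan is to reduce the statement to Proposition \ref{tarnafas} by exploiting the fact that the inverse partial Fourier transform intertwining $\Op^\tau_\beta$ and $\Sch^\tau_\beta$ depends only on the pair $(\A,\gamma)$ (and on $\tau$), but not on the pseudo-trivialization $\beta$.

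First I would recall the definition $\Op^\tau_\beta = \Sch^\tau_\beta\circ \mathfrak F_\A^{-1}$, where $\mathfrak F_\A:\mathfrak C^\tau\!(\A,\gamma)\to\mathfrak B^\tau\!(\A,\gamma)$ is the isomorphism extending $\mathsf{id}_\A\,\overline\otimes\,\fscr F$. Since $\mathfrak F_\A$ involves only $\A$, $\gamma$ and $\tau$, the identical formula $\Op^\tau_{\beta'} = \Sch^\tau_{\beta'}\circ \mathfrak F_\A^{-1}$ holds with the same intertwiner. Given $f\in\mathfrak B^\tau\!(\A,\gamma)$, set $\Phi := \mathfrak F_\A^{-1}(f)\in\mathfrak C^\tau\!(\A,\gamma)$.

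Next I would invoke Proposition \ref{tarnafas} applied to $\Phi$, which (under the hypothesis $\beta' = \delta^0(a)\beta$) yields
$$
\Sch^\tau_{\beta'}(\Phi) \;=\; M_a^*\,\Sch^\tau_\beta(\Phi)\,M_a\,.
$$
Substituting $\Phi = \mathfrak F_\A^{-1}(f)$ on both sides and using $\Sch^\tau_\beta\circ \mathfrak F_\A^{-1}=\Op^\tau_\beta$ (and analogously for $\beta'$), this is precisely the claimed identity
$$
\Op^\tau_{\beta'}(f) \;=\; M_a^*\,\Op^\tau_\beta(f)\,M_a\,.
$$

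The argument is essentially a transport of structure along the enveloping isomorphism $\mathfrak F_\A$, so there is no genuine obstacle once one observes that $\mathfrak F_\A$ is $\beta$-independent. The only point worth checking carefully is that the universal property was already used to produce $\Sch^\tau_{\beta'}$ and $\Sch^\tau_\beta$ as $^*$-representations of the same $C^*$-algebra $\mathfrak C^\tau\!(\A,\gamma)$ on $L^2(\G)$, which is exactly what makes the conjugation relation extend from $L^1(\G;\A)$ to all of $\mathfrak C^\tau\!(\A,\gamma)$; this has already been recorded in Proposition \ref{tarnafas}, so here it suffices to quote it.
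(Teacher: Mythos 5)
Your proof is correct and follows exactly the route the paper intends: the paper presents Proposition~\ref{opbeta} as an immediate consequence of Proposition~\ref{tarnafas} without writing out the details, and what you have done is precisely to make that implication explicit by observing that $\Op^\tau_\beta=\Sch^\tau_\beta\circ\mathfrak F_\A^{-1}$ with a $\beta$-independent intertwiner $\mathfrak F_\A$, so that \eqref{firsta} transports directly. No gap; this is the intended argument.
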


\begin{Remark}\label{dalgebra}
{\rm By Remark \ref{tauiso} and the enveloping procedure, one gets isomorphisms $\mu_{\beta}^{\tau,\tau'}: \mathfrak B^{\tau}\!(\A,\gamma)\to\mathfrak B^{\tau'}\!(\A,\gamma)$ for different maps $\tau$ and $\tau'$, which satisfy $\Op^\tau_{\beta}=\Op^{\tau}_{\beta}\circ\mu_{\beta}^{\tau,\tau'}$. }
\end{Remark}

\begin{Remark}\label{resharp}
{\rm The involution and multiplication in the algebra $\mathfrak B^{\tau}\!(\A,\gamma)$ are defined to satisfy the relations
\begin{equation*}\label{kau}
\Op^\tau_\beta(f\#^{\tau}_{\gamma}\,g)=\Op^\tau_\beta(f)\Op^\tau_\beta (g)\, \quad {\rm and} \quad \Op^\tau_\beta (f^{\#^{\tau}_{\gamma}}) = \Op^\tau_\beta(f)^{*}.
\end{equation*}
}
\end{Remark}

\subsection{Twisted convolution operators}\label{twistconop}

Consider a compatible basic data $(\A,\gamma)$\,, its $\tau$-Sch\"odinger representation  $\Sch^{\tau}_{\beta}$ associated to a pseudo-trivialization $\beta$ of $\gamma$ and its extension to the multiplier algebra $\M\big[\mathfrak C^{\tau}\!(\A,\gamma)\big]$\,. 

For $a\in\M(\A)$ and $w\in L^{1}(\G)$ we set $a\otimes w\in L^{1}[\G,\M(\A)]$\,, where 
$$
(a\otimes w)(q;x):=a(q)w(x)\,.
$$ 
If we denote by $1_{\A}$ the unit in $\A$\,, i.e. $1_{\A}(q)=1$ for all $q\in\G$\,, one can check  \cite{BM} that $\M\big[\mathfrak C^{\tau}\!(\A,\gamma)\big]$ contains elements of the form $1_{\A}\otimes w$ for all $w\in L^{1}(\G)$\,, thus one can embed $L^{1}(\G)$ in $\M\big[\mathfrak C^{\tau}\!(\A,\gamma)\big]$\,. 

Setting $\Sch_{\beta}=\Sch^{\tau}_{\beta}$ if $\tau\equiv{\sf e}$\,,  the {\it twisted convolution operator} ${\sf Conv}_{\beta}:L^{1}(\G)\to\mathbb{B}[L^{2}(\G)]$ is 
\begin{equation}\label{convdef}
[{\sf Conv}_{\beta}(w)u](q):=[\Sch_{\beta}(1_{\A}\otimes w)u](q)=\int_{\G}\beta(q;z)w(z)u(z^{-1}q)\,d\m(z)\,.
\end{equation}
They are no longer invariant operators. We refer to Remark \ref{intors} for another point of view. For $v,\,w\in L^{1}(\G)$ one has
\begin{equation*}
{\sf Conv}_{\beta}(v){\sf Conv}_{\beta}(w) = \Sch_{\beta}[(1_{\A}\otimes v)] \Sch_\beta[(1_{\A}\otimes w)]= \Sch_{\beta}[(1_{\A}\otimes v)\diamond^{{\sf e}} _{\gamma}(1_{\A}\otimes w)]\,.
\end{equation*}
If $\gamma$ is trivial the choice $\beta=1$ is legitimate and ${\sf Conv}(w)$ is nothing but the left convolution by $w$ and one has ${\sf Conv}(v){\sf Conv}(w)={\sf Conv}(v*w)$ \cite{MR}. For general $\gamma$\,, a straightforward computation leads to
$$
[{\sf Conv}(v){\sf Conv}(w)u](q)=\int_\G\beta(q;x)\Big[\int_\G \gamma\big(q;xy^{-1}\!,y\big)v(xy^{-1})w(y) d\m(y)\Big]u(x^{-1}q)d\m(x)
$$
and this no longer a twisted convolution operator, since the inner integral also depends on $q$\,.

\medskip
We now want to compute $\Op^\tau_{\beta}(a\otimes\widehat w)$\,, where $a$ is a bounded uniformly continuous function and $\widehat w$ is the Fourier transform of a function in $L^{1}(\G)$. For general $\tau$ the formula is too complicated. But considering the case of a constant map $\tau(\cdot)\equiv x_0$ for some $x_0\in\G$\,, one obtains
$$
\begin{aligned}
([\Op^{x_0}_{\beta}(a\otimes\widehat w)]u)(q) &= a(x_0^{-1}q)\int_{\G}\beta(q;qy^{-1})(\mathscr F ^{-1}\widehat w)(qy^{-1})u(y)\,
d\m(y) \\
&= a(x_0^{-1}q)\int_{\G}\beta(q;z)w(z)u(z^{-1}q)d\m(z) \,,
\end{aligned}
$$
which can be rewritten as
\begin{equation}\label{opbasice}
\Op^{x_0}_{\beta}(a\otimes\widehat w)= M_{{\sf L}_{x_0}\!(a)}\circ{\rm Conv}_{\beta}(w)
\end{equation}
where $M_{{\sf L}_{x_0}\!(a)}$ is the multiplication operator by the function ${\sf L}(x_0)a$\,. 
If $x_0={\sf e}$\,, in the quantization $\Op_{\beta}\equiv\Op^\e_{\beta}$ the multiplication operators stay at the left and twisted convolutions to the right. 
Analogously, for $\tau={\sf id}$ one gets 
$$
\big(\big[\Op^{{\sf id}}_{\beta}(a\otimes\widehat w)\big]u\big)(q) = \int_{\G}\beta(q;qy^{-1})w(qy^{-1})a(y)u(y)\,d\m(y)\,, 
$$
and thus, with an opposite ordering,
\begin{equation*}
\Op^{{\sf id}}_{\beta}(a\otimes\widehat w)={\sf Conv}_{\beta}(w)\circ M_{a}\,.
\end{equation*}

\subsection{From $\Op^{\tau}_{\beta}$ to a twisted Weyl system}\label{ws}

In this section we will study the quantization $\Op^{\tau}_{\beta}$ for symbols $f\in L^{2}(\G)\otimes \mathscr B^{2}(\wG)$\,, recovering it by an integration procedure from a twisted Weyl system, a generalization of phase-space translations from the case $\G=\R^n$\,. We will keep the notations of the previous sections and fix the algebra $\A=C_{0}(\G)$ for which it is known that $\mathfrak C^{\tau}[C_{0}(\G),\gamma]=C_{0}(\G)\!\rtimes_{{\sf L},\gamma}^{\tau}\!\G$ is isomorphic to the compact operators on $L^{2}(\G)$\,: 
one has for any pseudo-trivialization $\beta$ of the $2$-cocycle $\gamma$
$$ 
L^{2}(\G)\otimes\mathscr B^{2}(\wG)\cong L^{2}(\G)\otimes\overline{L^{2}(\G)}\cong \mathbb{B}^{2}[L^{2}(\G)] \subset 
\mathbb{K}[L^{2}(\G)]=\Op^{\tau}_{\beta}\big[\mathfrak B^{\tau}(C_{0}(\G),\gamma)\big]\,.
$$

We start examining the operator  $\Op^{\tau}_{\beta}(f)$ for $f\in (\id\otimes\mathscr F)[C_{c}(\G\times\G)]$\,, given by
\begin{equation}\label{keropi}
\Op^{\tau}_{\beta}(f)=\big[{\sf Int}\circ {\sf Ker}^{\tau}_{\beta}\big](f)\,,
\end{equation}
where, by \eqref{surprize}, the integral kernel is $\big[{\sf C}\circ M_{\beta}\circ\V^{\tau}\circ(\id\otimes{\fscr F}^{-1})\big](f)$ and given formally by
$$
\big[{\sf Ker}^{\tau}_{\beta}(f)\big](x,y)=\int_{\wG} \beta(x;xy^{-1}){\sf Tr}_{\xi}\!\[ \xi(xy^{-1})f\big(\tau(xy^{-1})^{-1}x,\xi\big)\]d\wm(\xi)\,.
$$
It is clear that the operators $M_{\beta}$, $\C$, $\V^{\tau}$ are unitary in $L^{2}(\G\times\G)$\,. Then one has ${\sf Ker}^{\tau}_{\beta}(f)\in L^{2}(\G\times\G)$ and thus $\Op^{\tau}_{\beta}$ is a Hilbert-Schmidt operator in $L^{2}(\G)$\,. Also applying the non-commutative Plancherel theorem, one obtains the unitary map 
$$
L^{2}(\G)\otimes\mathscr B^{2}(\wG)\ni f\mapsto \Op^{\tau}_{\beta}(f)\in\mathbb{B}^{2}[L^{2}(\G)]\,.
$$

\begin{Definition}\label{verners}
For $u,v\in L^2(\G)$\,, we call $\,\mathcal V^{\tau,\beta}_{u,v}$ the unique element of $L^{2}(\G)\otimes\mathscr B^{2}(\wG)$ wich corresponds via $\Op^{\tau}_{\beta}$ to the rank one operator $\,w\mapsto\Lambda_{u,v}(w):=\<w,u\>v\,$.
\end{Definition}

\begin{Proposition}\label{cuyu}
For all $u,v \in L^{2}(\G)$ one has
$$
\mathcal V^{\tau,\beta}_{u,v}=\[(\id\otimes\mathscr F)\circ ({\sf V}^{\tau})^{-1}\circ M_{\beta^{-1}} \circ {\sf C}^{-1}\](v\otimes\overline u)\,.
$$
\end{Proposition}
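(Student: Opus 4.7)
My plan is to exploit the factorization of $\Op^\tau_\beta$ given in (\ref{surprize}) and simply invert it factor by factor. Writing
$$\Op^\tau_\beta = {\sf Int}\circ\C\circ M_\beta\circ \V^\tau\circ (\id\otimes\fscr F^{-1}),$$
each of the intermediate maps $\C$, $M_\beta$, $\V^\tau$ is a unitary on $L^2(\G\times\G)$ (the first and third are composition with measure-preserving changes of variables, the second is multiplication by a $\T$-valued function), and $\id\otimes\fscr F^{-1}$ is the Plancherel unitary from $L^2(\G)\otimes\mathscr B^2(\wG)$ onto $L^2(\G\times\G)$. Finally, ${\sf Int}$ is the canonical unitary from $L^2(\G\times\G)$ onto $\mathbb B^2[L^2(\G)]$. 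Hence $\Op^\tau_\beta$ itself is a unitary isomorphism $L^2(\G)\otimes\mathscr B^2(\wG)\to\mathbb B^2[L^2(\G)]$ and admits a termwise inverse.

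The second step is to identify the kernel of $\Lambda_{u,v}$. Since $[\Lambda_{u,v}(w)](x)=\langle w,u\rangle v(x)=\int_\G v(x)\overline{u(y)}w(y)d\m(y)$, we have $\Lambda_{u,v}={\sf Int}(v\otimes\overline u)$. Combining with the previous factorization, the equation $\Op^\tau_\beta\big(\mathcal V^{\tau,\beta}_{u,v}\big)=\Lambda_{u,v}$ becomes
$$\big(\C\circ M_\beta\circ \V^\tau\circ (\id\otimes\fscr F^{-1})\big)\!\big(\mathcal V^{\tau,\beta}_{u,v}\big)=v\otimes\overline u,$$
after cancelling the unitary ${\sf Int}$ on both sides.

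The third step is to invert the remaining unitaries, which is immediate: $\C^{-1}$ is composition with $\c^{-1}$, $M_\beta^{-1}=M_{\beta^{-1}}$ because $\beta$ is $\T$-valued, $(\V^\tau)^{-1}$ is composition with the inverse change of variables, and $(\id\otimes\fscr F^{-1})^{-1}=\id\otimes\fscr F$. Applying these successively to both sides yields
$$\mathcal V^{\tau,\beta}_{u,v}=\big[(\id\otimes\fscr F)\circ(\V^\tau)^{-1}\circ M_{\beta^{-1}}\circ\C^{-1}\big](v\otimes\overline u),$$
which is the claimed formula.

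I do not expect any serious obstacle: the only subtleties are checking that the integral kernel formula for $\Op^\tau_\beta$ in (\ref{keropi}) extends from the dense subspace $(\id\otimes\fscr F)[C_{\rm c}(\G\times\G)]$ to all of $L^2(\G)\otimes\mathscr B^2(\wG)$ by continuity, and that $v\otimes\overline u$ is genuinely an element of $L^2(\G\times\G)$ (clear, by $\|v\otimes\overline u\|_2=\|u\|_2\|v\|_2$). Both rest on the unitarity of the factors already established in the paragraphs preceding Definition \ref{verners}, so the argument is essentially bookkeeping.
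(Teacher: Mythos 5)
Your proposal is correct and uses essentially the same ingredients as the paper's proof: the factorization \eqref{surprize}, the unitarity of each factor (established just before Definition~\ref{verners}), and the identification $\Lambda_{u,v}={\sf Int}(v\otimes\overline u)$. The paper carries out the inversion by moving each unitary across an $L^2$/Hilbert--Schmidt inner product $\langle f,\mathcal V^{\tau,\beta}_{u,v}\rangle=\langle\Op^\tau_\beta(f),\Lambda_{u,v}\rangle$ rather than by cancelling ${\sf Int}$ and inverting term by term, but this is cosmetic; the one small benefit of the paper's inner-product route is that it yields, as a byproduct already inside the proof, the relation $\langle\Op^\tau_\beta(f)u,v\rangle_{L^2(\G)}=\langle f,\mathcal V^{\tau,\beta}_{u,v}\rangle_{\mathscr B^2(\Gamma)}$ which is reused in Proposition~\ref{rubincar}.
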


\begin{proof}
For $f\in L^{2}(\G)\otimes\mathscr B^{2}(\wG)$ we compute using the fact that $\Op^{\tau}_{\beta}$ is unitary
\begin{eqnarray*}
\big\<f,\mathcal V^{\tau,\beta}_{u,v}\big\>_{L^{2}(\G)\otimes\mathscr B^{2}(\wG)} &=&\big\<\Op^{\tau}_{\beta}(f),\Lambda_{u,v}(w)\big\>_{\mathbb B^2[L^{2}(\G)]}= {\sf Tr}[\Op^{\tau}_{\beta}(f)\Lambda_{u,v}^{*}] ={\sf Tr}\big[\Lambda_{v,\Op^{\tau}_{\beta}(f)u}\big] \\
&=& \big\<\Op^{\tau}_{\beta}(f)u,v\big\>_{L^{2}(\G)} = \big\<\big[\big({\sf Int}\circ{\sf Ker}^\tau_\beta\big)(f)\big]u,v\big\>_{L^{2}(\G)} \\
&=& \int_{\G}\(\int_{\G} \big(\[\C\circ M_{\beta}\circ\V^{\tau}\!\circ(\id\otimes{\fscr F}^{-1})\]\!f\big)\!(x,y)u(y)\,d\m(y)\)\overline{v(x)}d\m(x) \,\\
&=&\int_{\G}\int_{\G} \(\[\C\circ M_{\beta}\circ\V^{\tau}\!\circ(\id\otimes{\fscr F}^{-1})\]\!f\)\!(x,y)(\overline v\otimes u)(x,y)\,d\m(y)d\m(x) \\
&=&\big\<\[\C\circ M_{\beta}\circ\V^{\tau}\!\circ(\id\otimes{\fscr F}^{-1})\]\!f, v\otimes\overline u\,\big\>_{L^{2}(\G)\otimes L^{2}(\G)} \\
&=& \big\<\[(\id\otimes{\fscr F}^{-1})\]\!f,\[(\V^{\tau})^{-1}\!\circ M_{\beta^{-1}}\circ \C^{-1} \] (v\otimes\overline u)\big\>_{L^{2}(\G)\otimes L^{2}(\G)} \\
&=& \big\<f, \[(\id\otimes{\fscr F})\circ (\V^{\tau})^{-1}\!\circ M_{\beta^{-1}}\circ {\sf C}^{-1}\] (v\otimes\overline u)\big\>_{L^{2}(\G)\otimes\mathscr B^{2}(\wG)} \,,
\end{eqnarray*}
finishing the proof.
\end{proof}

One can write formally 
\begin{equation}\label{io}
\mathcal V^{\tau,\beta}_{u,v}(x,\xi)= \int_{\G}\,\overline{\beta(\tau(y)x;y)}\,v(\tau(y)x)\,\overline{u(y^{-1}\tau(y)x)}\,\xi(y)^{*}\,d\m(y)\,.
\end{equation}
This formula hods, for example, if $u,\, v\in C_{c}(\G)$\,.

\begin{Proposition}\label{rubincar}
The transformation $f\mapsto\Op^{\tau}_{\beta}(f)$ maps unitarily the space $L^2(\G)\otimes\mathscr B^{2}(\wG)$ in the  Hilbert-Schmidt class on $L^{2}(\G)$. Moreover, $\Op^{\tau}_{\beta}(f)$ is the unique bounded linear operator in $L^2(\G)$ associated by the relation $\op^\tau_\beta[f](u,v)=\big\<\Op^\tau_\beta(f) u,v\big\>_{L^2(\G)}$
to the bounded sesquilinear form
\begin{equation}\label{fernan}
\op^\tau_\beta[f]:L^2(\G)\times L^2(\G)\rightarrow\mathbb C\,,\quad\op^\tau_\beta[f](u,v):=\big\< f,\mathcal V^{\tau,\beta}_{u,v}\big\>_{\!\mathscr B^2(\Gamma)}\,.
\end{equation}
\end{Proposition}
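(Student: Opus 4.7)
The plan is to harvest what has already been done. The first assertion, that $f\mapsto\Op^\tau_\beta(f)$ is a unitary isomorphism from $L^2(\G)\otimes\mathscr{B}^2(\wG)$ onto $\mathbb{B}^2[L^2(\G)]$, essentially appears in the discussion preceding Definition \ref{verners}. I would make it explicit by invoking the factorization \eqref{surprize}, namely
\begin{equation*}
\Op^\tau_\beta=\sf{Int}\circ\C\circ M_\beta\circ\V^\tau\circ(\id\otimes{\fscr F}^{-1})\,,
\end{equation*}
and observing that each factor is unitary between the appropriate Hilbert spaces: the partial Plancherel isomorphism $\id\otimes{\fscr F}^{-1}:L^2(\G)\otimes\mathscr B^2(\wG)\to L^2(\G\times\G)$; the changes of variables $\V^\tau,\C$ on $L^2(\G\times\G)$, which are unitary because $\m$ is left (and, by unimodularity, right) invariant; the multiplication operator $M_\beta$ by the $\T$-valued $\beta$; and finally the kernel isomorphism $\sf{Int}:L^2(\G\times\G)\to\mathbb{B}^2[L^2(\G)]$.

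For the second assertion I would start from Definition \ref{verners}: by construction $\Op^\tau_\beta(\mathcal V^{\tau,\beta}_{u,v})=\Lambda_{u,v}$. Hence, using that $\Op^\tau_\beta$ is unitary and that the Hilbert--Schmidt scalar product is given by $\<A,B\>_{\mathbb B^2}=\sf{Tr}(AB^*)$, I compute
\begin{equation*}
\big\<f,\mathcal V^{\tau,\beta}_{u,v}\big\>_{\mathscr B^2(\Gamma)}
=\big\<\Op^\tau_\beta(f),\Lambda_{u,v}\big\>_{\mathbb B^2[L^2(\G)]}
=\sf{Tr}\big[\Op^\tau_\beta(f)\Lambda_{u,v}^*\big]\,.
\end{equation*}
A direct check shows $\Lambda_{u,v}^*=\Lambda_{v,u}$ and $A\Lambda_{v,u}=\Lambda_{v,Au}$, so the trace reduces to $\<\Op^\tau_\beta(f)u,v\>_{L^2(\G)}$. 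Comparing with \eqref{fernan} yields $\op^\tau_\beta[f](u,v)=\<\Op^\tau_\beta(f)u,v\>_{L^2(\G)}$, which is the asserted identification.

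Boundedness of the sesquilinear form $\op^\tau_\beta[f]$ follows at once from the Cauchy--Schwarz inequality applied to \eqref{fernan} together with the bound $\|\mathcal V^{\tau,\beta}_{u,v}\|_{\mathscr B^2(\Gamma)}=\|\Lambda_{u,v}\|_{\mathbb B^2}=\|u\|_{L^2(\G)}\|v\|_{L^2(\G)}$, which itself is a byproduct of the unitarity in part one. Uniqueness of the bounded operator associated to $\op^\tau_\beta[f]$ is the standard Riesz correspondence between bounded sesquilinear forms on a Hilbert space and bounded operators.

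There is no serious obstacle here: the entire statement is a reformulation of Definition \ref{verners} and Proposition \ref{cuyu} in the language of sesquilinear forms. The only care needed is with conventions for the rank-one operator $\Lambda_{u,v}$ and its adjoint when computing the trace; once those are fixed consistently with the convention $\<w,u\>v$, all identifications line up.
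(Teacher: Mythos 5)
Your proposal is correct and follows exactly the paper's route: the unitarity comes from the factorization \eqref{surprize} into unitary factors (already established before Definition \ref{verners}), and the sesquilinear-form identity is precisely the chain of equalities appearing inside the proof of Proposition \ref{cuyu}, which the paper's own proof simply cites. You have merely made explicit the rank-one bookkeeping ($\Lambda_{u,v}^*=\Lambda_{v,u}$, $A\Lambda_{v,u}=\Lambda_{v,Au}$, ${\rm Tr}\,\Lambda_{v,Au}=\langle Au,v\rangle$) and the final appeal to the Riesz correspondence, neither of which is spelled out in the paper but both of which are correct.
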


\begin{proof}
This is just a summary of results already obtained above. The identity 
$$
\big\<\Op^\tau_\beta(f) u,v\big\>_{L^2(\G)}=\big\< f,\mathcal V^{\tau,\beta}_{u,v}\big\>_{\!\mathscr B^2(\Gamma)}
$$ 
has been obtained during the proof of Proposition \ref{cuyu}.
\end{proof}

We also introduce
\begin{equation}\label{wotro}
\mathcal W^{\tau,\beta}_{u,v}:=(\mathscr F\otimes\mathscr F^{-1})\mathcal V^{\tau,\beta}_{u,v}=\[(\mathscr F\otimes \id)\circ ({\sf V}^{\tau})^{-1}\!\circ M_{\beta^{-1}} \circ {\sf C}^{-1}\](v\otimes\overline u)\,,
\end{equation}
for which, by the Plancherel theorem and using the notation $\widehat f:=(\mathscr F\otimes\mathscr F^{-1})f$\,, one has the identity
\begin{equation*}\label{wdef}
\<\Op^{\tau}_{\beta}(f)u,v\>_{L^{2}(\G)} =\big\<\widehat f,\mathcal W^{\tau,\beta}_{u,v}\big\>_{\mathscr B^{2}(\wG)\otimes L^{2}(\G)}\,,\quad u,v\in L^2(\G)\,.
\end{equation*}
We note the {\it orthogonality relations} (recall \eqref{recal})
\begin{equation*}\label{orthogo}
\big\<\mathcal W^{\tau,\beta}_{u,v},\mathcal W^{\tau,\beta}_{u',v'}\big\>_{\mathscr B^{2}(\widehat\Gamma)}=\<u',u\>_{L^{2}(\G)}\<v,v'\>_{L^{2}(\G)}={\sf Tr}[\Lambda_{u,v}\Lambda_{v',u'}]=\big\<\mathcal V^{\tau,\beta}_{u,v},\mathcal V^{\tau,\beta}_{u'\!,v'}\big\>_{\mathscr B^{2}(\Gamma)}\,.
\end{equation*}

\begin{Definition}\label{graak}
For each $x\in \G$ and $\xi\in\wG$ we define ${\sf W}^{\tau}_{\beta}(\xi,x)$ to be the unique bounded linear operator in $L^{2}(\G,\H_{\xi}):=L^{2}(\G)\otimes\H_{\xi}$ satisfying for all $u,v\in L^2(\G)$ and $\varphi_{\xi},\psi_{\xi}\in\H_\xi$
\begin{equation*}\label{draak}
\big\<\mathcal W^{\tau,\beta}_{u.v}(\xi,x)\varphi_{\xi},\psi_{\xi}\big\>_{\H_{\xi}}=\big\<{\sf W}^{\tau}_{\beta}(\xi,x)(\overline u\otimes\varphi_{\xi}),\overline v\otimes\psi_{\xi}\big\>_{L^{2}(\G,\H_{\xi})}.
\end{equation*}
The family $\big\{{\sf W}^{\tau}_{\beta}(\xi,x)\big\}_{\!(\xi,x)\in\wG\times\G}\,$ will be called {\rm \,the $\tau$-twisted Weyl system associated to the pseudo-trivialization $\beta$\,}.
\end{Definition}

\begin{Remark}\label{retauriels}
{\rm For an operator $T$ in $L^2(\G;\H_\xi)\cong L^2(\G)\otimes\H_\xi$ and a pair of vectors $u,v\in L^2(\G)$\,, the action of $\,\left<Tu,v\right>_{L^2(\G)}\in\mathbb B(\H_\xi)$ on $\,\varphi_\xi\in\H_\xi$ is given by 
\begin{equation*}\label{tauriel}
\left<Tu,v\right>_{L^2(\G)}\varphi_\xi:=\int_{\G}[T(u\otimes\varphi_\xi)](y)\overline{v(y)}\,d\m(y)\in\H_\xi\,.
\end{equation*}
With this interpretation one has the  the equality
\begin{equation*}\label{leos}
\W_{u,v}^{\tau,\beta}(\xi,x)=\left<{\sf W}^\tau_\beta(\xi,x)\overline u,\overline v\right>_{L^2(\G)}\in\mathbb B(\H_\xi)\,,
\end{equation*}
so $(u,v)\to\W_{u,v}^{\tau,\beta}$ can be seen as a {\it "Fourier-Wigner transform"} \cite{Fo} associated to  ${\sf W}^\tau_\beta$\,. Then one can interpret $(u,v)\to\mathcal V_{u.v}^{\tau,\beta}$ as a {\it Wigner transform}. }
\end{Remark}

Using Remark \ref{retauriels} and the equation (\ref{wotro}) one can give the explicit formula for the twisted Weyl system.

\begin{Proposition}\label{bojan}
For $\,\Theta\in L^{2}(\G,\H_{\xi})$ one has
\begin{equation}\label{weylex}
\[{\sf W}^{\tau}_{\beta}(\xi,x)\Theta\]\!(y)=\overline{\beta(y;x)}\xi(y)^*\xi(\tau(x))\[\Theta(x^{-1}y)\]\,.
\end{equation} 
\end{Proposition}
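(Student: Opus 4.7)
The plan is to unravel Definition \ref{graak} by plugging in the explicit formula \eqref{wotro} for $\mathcal W^{\tau,\beta}_{u,v}$ applied to factorized tensors $\overline u\otimes\varphi_\xi$, and then read off the action of ${\sf W}^{\tau}_{\beta}(\xi,x)$. First I would compute the effect of the three auxiliary operators ${\sf C}^{-1}$, $M_{\beta^{-1}}$, and $({\sf V}^{\tau})^{-1}$ on the product $v\otimes\overline u$. Inverting the maps $\c$ and $\v^{\tau}$ from \eqref{genge} gives $\c^{-1}(q,y)=(q,y^{-1}q)$ and $(\v^{\tau})^{-1}(q,x)=(\tau(x)q,x)$, so the composition sends $v\otimes\overline u$ to the function $F$ on $\G\times\G$ defined by
$$F(x,y)=\overline{\beta(\tau(y)x;y)}\,v(\tau(y)x)\,\overline{u(y^{-1}\tau(y)x)}\,,$$
which is consistent with the already-established formula \eqref{io} for $\mathcal V^{\tau,\beta}_{u,v}$.

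Next I would apply the partial Fourier transform $\mathscr F\otimes{\sf id}$ in the first variable and then perform the substitution $z=\tau(y)x$ inside the group integral. By unimodularity of $\m$ the Haar measure is preserved, and the factor $\xi(x)^{*}=\xi(\tau(y)^{-1}z)^{*}$ decouples as $\xi(z)^{*}\xi(\tau(y))$ by the homomorphism property of the representation $\xi$. This yields the $\mathbb B(\H_{\xi})$-valued identity
$$\mathcal W^{\tau,\beta}_{u,v}(\xi,y)=\int_{\G}\overline{\beta(z;y)}\,v(z)\,\overline{u(y^{-1}z)}\,\xi(z)^{*}\xi(\tau(y))\,d\m(z)\,.$$

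Then I would compute the right-hand side of the defining equality in Definition \ref{graak}, plugging in the candidate formula \eqref{weylex} applied to $\Theta=\overline u\otimes\varphi_{\xi}$, unfolding the inner product on $L^{2}(\G,\H_{\xi})=L^{2}(\G)\otimes\H_{\xi}$ as $\int_{\G}\<\,\cdot\,,\,\cdot\,\>_{\H_{\xi}}d\m$. The integrand becomes $\overline{\beta(z;y)}\,v(z)\,\overline{u(y^{-1}z)}\,\bigl\langle\xi(z)^{*}\xi(\tau(y))\varphi_{\xi},\psi_{\xi}\bigr\rangle_{\H_{\xi}}$, which matches termwise the pairing of the displayed expression above with $\psi_{\xi}\in\H_{\xi}$. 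This pins down the action of ${\sf W}^{\tau}_{\beta}(\xi,y)$ on all factorized tensors; since these are total in $L^{2}(\G,\H_{\xi})$ and the operator described by \eqref{weylex} is manifestly bounded (indeed unitary, as $\overline{\beta(\,\cdot\,;y)}$ and the representation values are unitary and the map $z\mapsto x^{-1}z$ is measure-preserving), it extends uniquely to all $\Theta\in L^{2}(\G,\H_{\xi})$.

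The only delicate step is the change of variable $z=\tau(y)x$ combined with the homomorphism identity for $\xi$: without this precise substitution the representation factor would not split into the clean product $\xi(y)^{*}\xi(\tau(x))$ that appears in \eqref{weylex}. The rest is careful bookkeeping of complex conjugates and of the compositions of unitary change-of-variable operators on $L^{2}(\G\times\G)$.
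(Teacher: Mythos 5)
Your argument unravels Definition \ref{graak} via the operator factorization \eqref{wotro}, fixes decomposable tensors, performs the change of variable $z=\tau(y)x$, and extends by density — precisely the route the paper follows in its proof of Proposition \ref{bojan}, just presented with a shade more intermediate bookkeeping. The one tiny blemish (harmless) is that you attribute the invariance of the Haar measure under the substitution $x\mapsto\tau(y)x$ to unimodularity, when left-invariance alone suffices.
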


\begin{proof}
One computes for $u,v\in C_{c}(\G)$ and $\varphi_{\xi},\psi_{\xi}\in\H_{\xi}$
$$
\begin{aligned}
\big\<{\sf W}^{\tau}_{\beta}(\xi,x)(u\otimes\varphi_{\xi}), v\otimes\psi_{\xi}\big\>_{L^{2}(\G,\H_{\xi})} &= \big\<\mathcal W^{\tau,\beta}_{\overline u,\overline v}(\xi,x)\varphi_{\xi},\psi_{\xi}\big\>_{\H_{\xi}} \\
&=\big \<\[(\mathscr F\otimes \id)\circ ({\sf V}^{\tau})^{-1}\!\circ M_{\beta^{-1}}\!\circ {\sf C}^{-1}\](\overline v\otimes u)(\xi,x)\varphi_{\xi},\psi_{\xi}\big\>_{\H_{\xi}} \\
&= \int_{\G}\big\<\[({\sf V}^{\tau})^{-1}\!\circ M_{\beta^{-1}}\!\circ {\sf C}^{-1}\]\!(\overline v\otimes u)(z,x)\xi(z)^*\varphi_{\xi},\psi_{\xi}\big\>_{\H_{\xi}}\!d\m(z) \\
&= \int_{\G}\big\<\,\overline{\beta(\tau(x)z;x)}\overline{v(\tau(x)z)}u(x^{-1}\tau(x)z)\xi(z)^*\varphi_{\xi},\psi_{\xi}\big\>_{\H_{\xi}}\!d\m(z) \\
&= \int_{\G}\big\<\,\overline{\beta(y;x)}u(x^{-1}y)\xi^{*}(\tau(x)^{-1}y)\varphi_{\xi},v(y)\psi_{\xi}\big\>_{\H_{\xi}}d\m(z) \\
&=\big\<\,\overline{\beta(y;x)}\xi(\tau(x)^{-1}y)^{*}\!\[(u\otimes \varphi_{\xi})(x^{-1}y)\],v\otimes\psi_{\xi}\big\>_{L^{2}(\G,\H_{\xi})}\,.
\end{aligned}
$$
Thus one has for decomposable vectors
$$
\[{\sf W}^{\tau}_{\beta}(\xi,x)(u\otimes \varphi_{\xi})\]\!(y)=\overline{\beta(y;x)}\,\xi(\tau(x)^{-1}y)^{*}\!\[(u\otimes \varphi_{\xi})(x^{-1}y)\].
$$ 
Then the proof is easily finished by density.
\end{proof}

Recall that $\1\in\wG$ denotes the trivial one dimensional representation. Thus $\H_{\1}\cong\mathbb{C}$ and $L^{2}(\G,\H_{\1})\cong L^{2}(\G)$\,. We assume for simplicity (and because there are no interesting counter-examples) that $\tau(\e)=\e$\,.

\begin{Definition}\label{uvdef}
We define the unitary operators
\begin{equation*}\label{noslipt}
{\sf U}_{\beta}(x):= {\sf W}^{\tau}_{\beta}(x;\1) \in \B[L^{2}(\G)]\,,\quad {\sf V}(\xi):={\sf W}^{\tau}_{\beta}({\sf e},\xi) \in\B[L^{2}(\G,\H_\xi)]\,.\\
\end{equation*}
Explicitly, for $u\in L^{2}(\G)$ and $\Theta\in L^{2}(\G,\H_{\xi})$ one has
$$
[{\sf U}_{\beta}(x)u](y)=\overline{\beta(y,x)}u(x^{-1}y) \;,\quad [{\sf V}(\xi)\Theta](y)=\xi(y)^{*}\Theta(y)\,.
$$
\end{Definition}

We get immediately that 
\begin{equation*}\label{weyluv}
{\sf W}^{\tau}_{\beta}(\xi,x)={\sf V}(\xi)\big({\sf U}_{\beta}(x)\otimes\xi[\tau(x)]\big) \,.
\end{equation*}
It is also easy to prove the commutation relations ($\,\overline{\gamma(x,y)}$ is a multiplication operator in $L^2(\G)$)
\begin{eqnarray*}\label{commutarelone}
 {\sf U}_{\beta}(x){\sf U}_{\beta}(y)\!\!\!&=&\!\!\! \overline{\gamma(x,y)}{\sf U}_{\beta}(xy)\,,\\
\big({\sf V}(\xi)\otimes\id_{\eta}\big)\big(\id_{\xi}\otimes {\sf V}(\eta)\big)\!\!\! &=&\!\!\! \big(\id_{\xi}\otimes {\sf V}(\eta)\big)\big({\sf V}(\xi)\otimes\id_{\eta}\big) \,,\\
\big({\sf U}_{\beta}(x)\otimes\id_{\H_{\xi}}\big) {\sf V}(\xi)\!\!\!&=&\!\!\!{\sf V}(\xi)\big({\sf U}_{\beta}(x)\otimes\id_{\H_{\xi}}\big)\big(\id\otimes\xi(x)\big) \,.
\end{eqnarray*}
The map $x\mapsto {\sf U}_{\beta}(x)$ is a strongly continuous projective representation of the group $\G$ on $L^{2}(\G)$ with a vector valued 2-cocycle $\overline{\gamma}$\,. If $\G$ is Abelian the irreducible representations are all one-dimensional, ${\sf V}$ is a unitary representation of the dual group $\wG$   and the tensor products are no longer needed.

\begin{Remark}\label{intors}
{\rm The integrated form $w({\sf U}_{\beta}) := \int_{\G} w(x) {\sf U}_{\beta}(x)\,d\m(x)$ defined for $w\in L^1(\G)$ leads to
\begin{equation}\label{inteu}
[w({\sf U}_{\beta})u](q)=\int_{\G}\overline{\beta(q,x)}w(x)u(x^{-1}q)\,d\m(x) = \big[{\sf Conv}_{\overline{\beta}}(w)u\big](q)\,,
\end{equation}
so the twisted convolution operators of subsection \ref{twistconop} are recovered in this way. }
\end{Remark}

\subsection{Involutive algebras of symbols}

Since the pseudo-differential calculus for symbols in $L^{2}(\G)\otimes\mathscr B^{2}(\wG)$ is one-to-one, we can define a composition law $\,\#^{\tau}_\gamma$ and an involution $\,^{\#^{\tau}_\gamma}$ on $\mathscr B^2(\Gamma)$ by
\begin{equation*}\label{capsuni}
\Op^{\tau}_{\beta}(f \#^{\tau}_{\gamma}\,g) := \Op^{\tau}_{\beta}(f)\Op^{\tau}_{\beta}(g)\,,\quad\Op^{\tau}_{\beta}(f^{\#^{\tau}_{\gamma}}) := \Op^{\tau}_{\beta}(f)^* .
\end{equation*}
Of course, if $C_{0}(\G)\subset\A$\,, for symbols lying in $(\id\otimes\mathscr F)[\A\otimes L^{1}(\G)]\cap\big[L^{2}(\G)\otimes\mathscr B^{2}(\wG)\big]$ this algebraic structure coincides with the one defined in subsection \ref{frefelin} in the algebra $\mathfrak B^{\tau}\!(\A,\gamma)$\,; in particular, it does not depend on the choice of the pseudo-trivialization $\beta$\,. Using the equation (\ref{keropi}), the composition law can be written in terms of integral kernels as
$\,{\sf Ker}^\tau_\beta({f\#^\tau_\gamma g})={\sf Ker}^\tau_\beta(f)\bullet{\sf Ker}^\tau_\beta(g)$\,, where
\begin{equation*}\label{zuruz}
{\sf Ker}^\tau_\beta\!:=\C \circ M_\beta\circ \V^{\tau}\circ({\sf id}\otimes{\fscr F}^{-1})
\end{equation*} 
and $\,\bullet\,$ is the usual composition of kernels
\begin{equation*}\label{intco}
(M\bullet N)(x,y):=\int_\G M(x,z)N(z,y) d\m(z)\,,
\end{equation*} 
corresponding to ${\sf Int}(M\bullet N)={\sf Int}(M){\sf Int}(N)$\,. It follows that for $f,g\in \mathscr B^{2}(\Gamma)$
\begin{equation*}\label{compoz}
\begin{aligned}
f\#^\tau_\gamma\,g&=\big({\sf Ker}^\tau_\beta\big)^{-1}\big({\sf Ker}^\tau_\beta(f)\bullet{\sf Ker}^\tau_\beta(g)\big)\\
&=({\sf id}\otimes{\fscr F}) ({\sf CMV}^{\tau}_{\beta})^{-1}
\Big\{\big[{\sf CMV}^{\tau}_{\beta}\circ({\sf id}\otimes{\fscr F}^{-1})\big]f \bullet 
\big[{\sf CMV}^{\tau}_{\beta}\circ({\sf id}\otimes{\fscr F}^{-1})\big]g\Big\}\,,
\end{aligned}
\end{equation*}
where ${\sf CMV}^{\tau}_{\beta}:=\C\circ M_{\beta}\circ\V^{\tau}$. Similarly, in terms of the natural kernel involution $M^\bullet(x,y):=\overline{M(y,x)}$ (corresponding to ${\sf Int}(M)^*={\sf Int}(M^\bullet)$)\,, one gets
\begin{equation*}\label{invo}
f^{\#^{\tau}_\gamma}=\big({\sf Ker}^\tau_\beta\big)^{-1}\big[\big({\sf Ker}^\tau_\beta(f)\big)^\bullet\big]=({\sf id}\otimes{\fscr F})\circ({\sf CMV}^{\tau}_{\beta})^{-1}\Big\{\Big(\big[{\sf CMV}^{\tau}_{\beta}\circ({\sf id}\otimes{\fscr F}^{-1})\big]f\Big)^{\!\bullet}\Big\}\,.
\end{equation*}

Let us give the simple algebraic rules satisfied by the twisted Wigner transforms of Definition \ref{verners}\,:
For every $u_1,u_2,v_1,v_2\in L^2(\G)$ one can compute
$$
\begin{aligned}
\Op^{\tau}_{\beta}(\mathcal V^{\tau,\beta}_{u_1,v_1}\#^{\tau}_\gamma\,\mathcal V^{\tau,\beta}_{u_2,v_2}) &= 
\Op^{\tau}_{\beta}(\mathcal V^{\tau,\beta}_{u_1,v_1})\Op^{\tau}_{\beta}(\mathcal V^{\tau,\beta}_{u_2,v_2})= \Lambda_{u_1,v_1}\Lambda_{u_2,v_{2}}\\ 
&= \<v_{2},u_{1}\>\Lambda_{u_{2},v_{1}} = \<v_{2},u_{1}\>\Op^{\tau}_{\beta}(\mathcal V^{\tau,\beta}_{u_2,v_1}) \,.
\end{aligned}
$$
We summarize this and the simpler computation for the adjoint as
\begin{equation*}\label{vizi}
\mathcal V^{\tau,\beta}_{u_1,v_1}\#^\tau_\gamma\,\mathcal V^{\tau,\beta}_{u_2,v_2}=\<v_2,u_1\>\mathcal V^\beta_{u_2,v_1}\quad{\rm and}\quad\big(\mathcal V^{\tau,\beta}_{u,v}\big)^{\#^\tau_\gamma}=\mathcal V^{\tau,\beta}_{v,u}\,.
\end{equation*}

We recall that an involutive algebra $\big(\mathscr B,\#,^{\#}\big)$ is said to be {\it a Hilbert algebra} if it possesses a scalar product $\<\cdot,\cdot\>:\mathscr B\times\mathscr B\to\mathbb{C}$ such that  for every  $g$ the map $g\mapsto g\#f$ is continuous, $\mathscr B\#\mathscr B$ is total in $\mathscr B$ and
$$
\<g^{\#},f^{\#}\>=\<f,g\>\,,\quad\<g\#h,f\>=\<h,g^{\#}\#f\>\,,\quad \forall\,f,g,h\in\mathscr B\,.
$$
A complete Hilbert algebra is called {\it an $H^*$-algebra}. Recall that $\mathscr B^{2}(\Gamma)\!:=\!L^2(\G)\otimes\mathscr B^{2}(\wG)$ was introduced in \eqref{recal}.

\begin{Proposition}\label{gaura}
The space $\Big(\mathscr B^{2}(\Gamma),\#^{\tau}_\gamma,\,^{\#^{\tau}_\gamma},\<\cdot,\cdot\>_{\mathscr B^{2}(\Gamma)}\Big)$ is an $H^{*}$-algebra.
\end{Proposition}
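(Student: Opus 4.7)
The plan is to transport the known $H^*$-algebra structure of the Hilbert--Schmidt class $\mathbb B^2[L^2(\G)]$ back to $\mathscr B^2(\Gamma)$ via the unitary isomorphism $\Op^\tau_\beta$ established earlier in this subsection and in Proposition \ref{rubincar}. The crucial fact is that, by construction, $\Op^\tau_\beta$ intertwines $(\#^\tau_\gamma,{}^{\#^\tau_\gamma},\langle\cdot,\cdot\rangle_{\mathscr B^2(\Gamma)})$ on the symbol side with operator composition, operator adjoint, and the Hilbert--Schmidt scalar product $\langle A,B\rangle_{\mathbb B^2}={\sf Tr}(AB^*)$ on the operator side. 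Since $\mathbb B^2[L^2(\G)]$ is itself a standard $H^*$-algebra, nothing remains but to transport its axioms.

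Concretely, I would proceed as follows. First, recall from the discussion preceding the statement that $\Op^\tau_\beta:\mathscr B^2(\Gamma)\to \mathbb B^2[L^2(\G)]$ is unitary and satisfies
$\Op^\tau_\beta(f\#^\tau_\gamma g)=\Op^\tau_\beta(f)\Op^\tau_\beta(g)$ and $\Op^\tau_\beta(f^{\#^\tau_\gamma})=\Op^\tau_\beta(f)^*$. Completeness of $\mathscr B^2(\Gamma)$ is built into its definition as the Hilbert space tensor product $L^2(\G)\otimes\mathscr B^2(\wG)$, so only the Hilbert-algebra axioms need to be checked. For any $f,g,h\in\mathscr B^2(\Gamma)$ with $A:=\Op^\tau_\beta(f)$, $B:=\Op^\tau_\beta(g)$, $C:=\Op^\tau_\beta(h)$, one computes
\[
\langle g^{\#^\tau_\gamma},f^{\#^\tau_\gamma}\rangle_{\mathscr B^2(\Gamma)}={\sf Tr}(B^*(A^*)^*)={\sf Tr}(AB^*)=\langle f,g\rangle_{\mathscr B^2(\Gamma)},
\]
and
\[
\langle g\#^\tau_\gamma h,f\rangle_{\mathscr B^2(\Gamma)}={\sf Tr}(BCA^*)={\sf Tr}(C(B^*A)^*)=\langle h,g^{\#^\tau_\gamma}\#^\tau_\gamma f\rangle_{\mathscr B^2(\Gamma)},
\]
using cyclicity of the trace. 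Continuity of $f\mapsto g\#^\tau_\gamma f$ follows from the operator-norm estimate $\|BA\|_{\mathbb B^2}\le\|B\|_{\mathbb B}\|A\|_{\mathbb B^2}$ together with unitarity of $\Op^\tau_\beta$.

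Finally, for the totality of $\mathscr B^2(\Gamma)\#^\tau_\gamma\mathscr B^2(\Gamma)$, I would use Definition \ref{verners} and the algebraic rule
\[
\mathcal V^{\tau,\beta}_{u_1,v_1}\#^\tau_\gamma\,\mathcal V^{\tau,\beta}_{u_2,v_2}=\langle v_2,u_1\rangle\,\mathcal V^{\tau,\beta}_{u_2,v_1}
\]
established just above the statement. Since the rank-one operators $\Lambda_{u,v}$ are dense in $\mathbb B^2[L^2(\G)]$ and the $\mathcal V^{\tau,\beta}_{u,v}$ are their preimages under the unitary $\Op^\tau_\beta$, the set $\{\mathcal V^{\tau,\beta}_{u,v}\}$ is total in $\mathscr B^2(\Gamma)$; by the product formula above, each such element already lies in $\mathscr B^2(\Gamma)\#^\tau_\gamma\mathscr B^2(\Gamma)$ (take $u_1=v_2$ of unit norm), so totality follows.

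No step here is really hard: the only subtlety is bookkeeping with the involutions and making sure to invoke $\Op^\tau_\beta$ as a unitary on $\mathscr B^2(\Gamma)$ rather than as a $^*$-homomorphism on the (different) algebra $\mathfrak B^\tau(\A,\gamma)$ of subsection \ref{frefelin}. The main conceptual point to emphasize is that $\#^\tau_\gamma$ and ${}^{\#^\tau_\gamma}$ on $\mathscr B^2(\Gamma)$ are \emph{defined} so that $\Op^\tau_\beta$ is a $^*$-algebra map, and this is precisely what makes the $H^*$-algebra structure transport without effort.
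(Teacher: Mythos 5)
Your proof is correct and takes essentially the same route as the paper: both identify $\Op^\tau_\beta$ as a unitary $^*$-algebra isomorphism onto $\mathbb B^2[L^2(\G)]$ and transport the well-known $H^*$-algebra structure of the Hilbert--Schmidt class back to $\mathscr B^2(\Gamma)$. The paper states this in one line; you have simply unpacked the axiom verifications that the paper leaves implicit.
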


\begin{proof}
One can check this directly. In a simpler way, one can recall the well-known fact  $\mathbb{B}^{2}\!\[L^{2}(\G)\]$ is an $H^{*}$-algebra with the operator composition, the adjoint and the scalar product $\<S,T\>_{\mathbb{B}^{2}[L^{2}(\G)]}={\sf Tr}[ST^{*}]$\,. Then one invokes the unitary $^*$-algebra isomorphism $\Op^{\tau}_{\beta}:L^{2}(\G)\otimes\mathscr B^{2}(\wG)\to\mathbb{B}^{2}\!\[L^{2}(\G)\]$ justified above.
\end{proof}

We now are interesting in the existence of a parameter $\tau$ and a cocycle $\gamma$ leading to a symmetric  quantization. This means that $f^{\#^{\tau}_{\gamma}}=f^{\star}$, where $^{\star}$ is the pointwise involution $f^\star(x,\xi):=f(x,\xi)^*\in \mathbb B(\H_\xi)$\,, or equivalently  $\Op^{\tau}_{\beta}(f^{\star})=\Op^{\tau}_{\beta}(f)^{*}$ (independently of the chosen pseudo-trivialization $\beta$)\,. This permits to show that  ``real valued symbols'' are sent to self-adjoint operators. The case $\gamma\equiv 1$ was discussed in \cite{MR}, where it was shown that a necessary and sufficient condition for symmetric quantization is the existence of a map $\tau$ which satisfies  the relation $\tau(x)=x\tau(x^{-1})$ for all $x\in\G$. If $\tau$ satisfies this relation is called {\it symmetric}. The existence of such a map in general seems to be rather obscure, but some examples are shown in \cite{MR}. 

If there exists a symmetric quantization with non-trivial cocycle we will say that $\G$ possesses a {\it twisted symmetric quantization}. The existence of this kind of quantizations  is unclear, but assuming the existence of a symmetric map $\tau$ one gets the following result.

\begin{Proposition}\label{symmetric}
Suppose that $\G$ admits a symmetric map $\tau$\,. A necessary and sufficient condition for the existence of a symmetric quantization is $\gamma(z,z^{-1})=1_\A\,$ for all $z\in\G$\,.
\end{Proposition}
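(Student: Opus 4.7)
The plan is to reduce the symmetric-quantization identity $\Op^\tau_\beta(f^\star)=\Op^\tau_\beta(f)^*$ --- equivalently, by Remark \ref{resharp}, $f^{\#^\tau_\gamma}=f^\star$ at the level of symbols --- to an algebraic identity on $L^1(\G;\A)$. Since $f^{\#^\tau_\gamma}$ is by construction the partial Fourier transform of $\Phi^{\diamond^\tau_\gamma}$, the requirement is equivalent to $\Phi^{\diamond^\tau_\gamma}=\Phi^\star$ for every $\Phi\in L^1(\G;\A)$, where $\Phi^\star$ denotes the element whose partial Fourier transform equals $f^\star$. A short calculation identifies it: taking adjoints in $f(q;\xi)=\int_\G\Phi(q;y)\xi(y)^*\,d\m(y)$, using $\xi(y)=\xi(y^{-1})^*$ and the unimodularity of $\m$ to substitute $y\mapsto y^{-1}$, one obtains $\Phi^\star(q;x)=\overline{\Phi(q;x^{-1})}$.

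The main cancellation then comes from the symmetry of $\tau$. Because $\tau(x)=x\,\tau(x^{-1})$,
$$
\tau(x^{-1})^{-1}x^{-1}\tau(x)=\tau(x^{-1})^{-1}x^{-1}\cdot x\,\tau(x^{-1})=\e,
$$
so the inner position-shift in the concrete involution formula for $\Phi^{\diamond^\tau_\gamma}$ displayed in Subsection \ref{flegarin} disappears, leaving
$$
\Phi^{\diamond^\tau_\gamma}(q;x)=\overline{\gamma(\tau(x)q;x,x^{-1})}\;\overline{\Phi(q;x^{-1})}.
$$
Comparing with $\Phi^\star(q;x)=\overline{\Phi(q;x^{-1})}$, the identity $\Phi^{\diamond^\tau_\gamma}=\Phi^\star$ for every $\Phi$ is equivalent to $\gamma(\tau(x)q;x,x^{-1})=1$ for all $q,x\in\G$. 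Since $q\mapsto\tau(x)q$ is a bijection of $\G$ for each fixed $x$, this reduces to $\gamma(q;x,x^{-1})=1$ for all $q,x\in\G$, which is precisely $\gamma(x,x^{-1})=1_\A$ in $\U(\A)$.

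For the necessity step, given any prescribed $q_0,x_0\in\G$ one chooses $\Phi(r;h)=\phi(h)\psi(r)$ with $\phi,\psi\in C_c(\G)$ supported near $x_0^{-1}$ and $q_0$; this produces nonvanishing $\overline{\Phi(q_0;x_0^{-1})}$ and thus forces $\gamma(\tau(x_0)q_0;x_0,x_0^{-1})=1$, after which strict continuity of $\gamma$ propagates the equality to the whole of $\G\times\G$. The one subtle point I foresee is the initial Fourier-side identification of $\Phi^\star$ as $\overline{\Phi(\,\cdot\,;x^{-1})}$ (rather than $\overline{\Phi(\,\cdot\,;x)}$); this rests on unimodularity together with the change of variable $y\mapsto y^{-1}$, and once that is secured everything downstream is routine algebraic simplification driven by symmetry of $\tau$.
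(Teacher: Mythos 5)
Your argument is correct, and it takes a somewhat different path than the paper's. The paper works one level lower, at the level of integral kernels: it computes $\,{\sf Ker}^\tau_\beta(f^\star)$ and ${\sf Ker}^\tau_\beta\big(f^{\#^\tau_\gamma}\big)$, finds that they match for all $f$ iff $\,\beta(x;xy^{-1})\beta(y;yx^{-1})=1\,$ for all $x,y$, and then uses the coboundary relation $\gamma=\delta^1(\beta)$ together with the normalization $\beta(\cdot;\e)=1$ to convert this $\beta$-condition back into the $\gamma$-condition $\gamma(x,x^{-1})=1_\A$. You instead stay one step upstream, on the crossed-product side: you identify the pointwise adjoint $f^\star$ with the $L^1(\G;\A)$--element $\Phi^\star(q;x)=\overline{\Phi(q;x^{-1})}$ (using unitarity of $\xi$, the change of variable $y\mapsto y^{-1}$, and unimodularity), and then compare directly with the concrete involution formula for $\Phi^{\diamond^\tau_\gamma}$ from Subsection \ref{flegarin}, in which the cocycle $\gamma$ --- not the pseudo-trivialization $\beta$ --- already appears explicitly. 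Once the symmetry $\tau(x)=x\tau(x^{-1})$ collapses the shift $\tau(x^{-1})^{-1}x^{-1}\tau(x)$ to $\e$, you read off the condition $\gamma(\tau(x)q;x,x^{-1})=1$ and remove the $\tau(x)$-translation since it is a bijection of $\G$. This is slightly more economical, as it bypasses the $\beta\rightsquigarrow\gamma$ conversion entirely and makes manifest that the answer cannot depend on the choice of pseudo-trivialization. The paper's route, on the other hand, fits naturally into the ${\sf Int}\circ{\sf Ker}^\tau_\beta$ formalism built up in Subsection \ref{ws} and reuses the kernel involution $M^\bullet$; both proofs hinge on exactly the same group-theoretic cancellation from the symmetry of $\tau$. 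Your explicit density/test-function argument for necessity is a small bonus; the paper leaves that step implicit.
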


\begin{proof}
The equality $f^{\star}=f^{\#^{\tau}_{\gamma}}$ can be examined at the level of the respective kernels 
$$
\begin{aligned}
\big[{\sf Ker}^{\tau}_{\beta} (f^{\star})\big](x,y)&= \beta(x;xy^{-1})\big([\id\otimes\mathscr F^{-1}]f^\star\big)(\tau(xy^{-1})^{-1}x;xy^{-1})\\
&=\beta(x;xy^{-1})\overline{\big([\id\otimes\mathscr F^{-1}]f\big)(\tau(xy^{-1})^{-1}x;yx^{-1})}
\end{aligned}
$$
and
$$
\big[{\sf Ker}^{\tau}_{\beta}\big(f^{\#^{\tau}_{\gamma}}\big)\big](x,y)=\big[{\sf Ker}^{\tau}_{\beta}(f)^{\bullet}\big](x,y)= \beta(y;yx^{-1})^{-1}\overline{\big([\id\otimes\mathscr F^{-1}]f\big)(\tau(yx^{-1})^{-1}y;yx^{-1})}\,.
$$
Since the map $\tau$ satisfies the equality $\tau(z)=z\tau(z^{-1})$ for all $z\in\G$\,, the previous expressions coincide for every $f\in L^{2}(\G\times\G)$ if and only if $\beta$ satisfies
\begin{equation}\label{symmbeta}
\beta(x;xy^{-1})\beta(y;yx^{-1})=1\,, \quad \forall\, x,\,y\in\G\,.
\end{equation}
By the pseudo-trivialization choice $\,\gamma=\delta^1(\beta)\,$, and since $\beta(\cdot;\e)=1$\,, one has
$$
\begin{aligned}
\gamma\big(x;xy^{-1}\!,yx^{-1}\big)&=\beta\big(x;xy^{-1}\big)\beta\big(\big[xy^{-1}\big]^{-1}\!x;yx^{-1}\big)\beta\big(x;\big[xy^{-1}\big]\big[yx^{-1}\big]\big)\\
&=\beta(x;xy^{-1})\beta(y;yx^{-1})
\end{aligned}
$$
and everything is clear.
\end{proof}

In the next section we will see examples of $2$-cocycles on nilpotent Lie groups allowing twisted symmetric quantizations.

\section{The case of nilpotent Lie groups}

We suppose now that $\G$ is a nilpotent Lie group, also assumed connected and simply connected. For theory of nilpotent Lie  group we refer to \cite{CG}. These kind of groups are second countable unimodular and of type I, so all  the previous constructions and results apply.   In this section we are going to show that, besides the  ``operator-valued twisted pseudo-differential calculus'' $\Op^{\tau}_{\beta}$ for symbols defined in $\G\times\wG$\,, there is also a ``scalar-valued  twisted pseudo-differential calculus'' $\mathbf{Op}^{\tau}_{\beta}$ which provides a quantization of the cotangent bundle of $\G$\,. We will also put into evidence $2$-cocycles defined by variable magnetic fields.

\subsection{Quantization of scalar symbols on  nilpotent Lie groups}\label{craur}

Let $\mathfrak g$ be the Lie algebra of $\G$ and $\mathfrak g^*$ its dual. If $X\in\g$ and $\mathcal X\in\g^*$ we set $\<X\!\mid\!\mathcal X\>:=\mathcal X(X)$\,. We also denote by $\exp:\mathfrak g\to\G$ the exponential map, which is a diffeomorphism. Its inverse is denoted by $\log:\G\rightarrow\mathfrak g$\,.
Under these diffeomorphisms the Haar measure on $\G$ corresponds to the Haar measure $dX$ on $\g$ (normalized accordingly). It then follows that $L^p(\G)$ is isomorphic to $L^p(\g)$\,: one has a surjective isometry
$$
L^{p}(\G)\overset{{\rm Exp}}{\rightarrow}L^{p}(\mathfrak g)\,,\quad {\rm Exp}(u):=u\circ\exp
$$
with inverse 
$$
L^{p}(\mathfrak g)\overset{{\rm Log}}{\rightarrow}L^{p}(\G)\,,\quad {\rm Log}(\mathbf u):=\mathbf u\circ\log\,.
$$
The Schwartz space $\S(\G)$ is just defined by transport of structure through ${\rm Log}$\,, starting from $\S(\g)$\,.

There is a Fourier transformation defined essentially by
\begin{equation*}\label{clata}
(\mathbf Fu)(\mathcal X)\equiv\hat u(\mathcal X):=\int_{\g}e^{-i\<X\mid\mathcal X\>} u(\exp X)\,dX =\int_\G\!\,e^{-i\<\log x\mid\mathcal X\>} u(x)d\m(x)\,,
\end{equation*}
with inverse
\begin{equation*}\label{clatta}
(\mathbf F^{-1}\mathfrak u)(x)\equiv\check{\mathfrak u}(x):=\int_{\g^*}\!e^{i\<\log x\mid\mathcal X\>} \mathfrak u(\mathcal X)\,d\mathcal X\,.
\end{equation*}
It can be seen as a unitary map $\,\mathbf F:L^2(\G)\rightarrow L^2(\g^*)$ or as a linear topological isomorphism $\,\mathbf F:\mathcal S(\G)\rightarrow\mathcal S(\g^*)$\,. We used a good normalization of the Haar measure on $\g^*$.

Returning to pseudo-differential operators, let us consider a compatible data $(\A,\gamma)$ as in subsection \ref{flegarin} and a continuous map $\tau:\G\to\G$\,.  We compose the Schr\"odinger representation (\ref{rada}) asociated to a pseudo-trivialization $\beta$ of $\gamma $ with the inverse of the partial Fourier transform  
$$
\id\otimes \mathbf F:(L^{1}\cap L^{2})(\G,\A)\to \A\otimes L^{2}(\mathfrak g^*)\,,
$$
finding the pseudo-differential representation 
\begin{equation*}\label{formulare}
\mathbf{Op}^{\tau}_{\beta}:=\Sch^{\tau}_{\beta}\circ (\id\otimes\mathbf F^{-1})\,,
\end{equation*}
which can be extended to the enveloping $C^{*}$-algebra. One gets more or less formally  
\begin{equation*}\label{getsy}
[\mathbf{Op}^{\tau}_{\beta}(\mathbf s)u](x)=\int_{\G}\int_{\mathfrak g^*}  \beta(x;xy^{-1})e^{i\<\log(xy^{-1})\mid\mathcal X\>}\mathbf s\big(\tau(xy^{-1})^{-1}x,\mathcal X\big)u(y)\,d\m(y)d\mathcal X\,.
\end{equation*}
Thus $\mathbf{Op}^{\tau}_{\beta}(\mathbf s)$ is an operator with integral kernel $\mathbf{Ker}^{\tau}_{\beta}[\mathbf s]:\G\times \G\to\mathbb{C}$ given by
$$
\mathbf{Ker}^{\tau}_{\beta}[\mathbf s]:=
\big[{\sf C}\circ M_{\beta}\circ {\sf V}^{\tau}\!\circ (\id\otimes \mathbf F^{-1})\big](\mathbf s)\,.
$$
Examining this kernel, obtained by applying to $\mathbf s$ unitary tranformations, one gets the unitary mapping 
$$
\mathbf{Op}^{\tau}_{\beta}:L^{2}(\G\times\mathfrak g^*)\to\mathbb{B}^2[L^{2}(\G)]\,.
$$

\begin{Remark}
{\rm One can also consider the composition law $\natural^{\tau}_{\gamma}$ defined to satisfy $\mathbf{Op}^{\tau}_{\beta}(\mathbf r\natural^{\tau}_{\gamma}\mathbf s)=\mathbf{Op}^{\tau}_{\beta}(\mathbf r)\mathbf{Op}^{\tau}_{\beta}(\mathbf s)$ and the involution defined to satisfies $\mathbf{Op}^{\tau}_{\beta}(\mathbf s^{\natural^{\tau}_{\gamma}})=\mathbf{Op}^{\tau}_{\beta}(\mathbf s)^{*}$. Then $\big(L^{2}(\G\times\mathfrak g^*),\natural^{\tau}_{\gamma},{}^{\natural^{\tau}_{\gamma}}\big)$ is a $^{*}$-algebra which is isomorphic to $\big(\mathscr B^{2}(\G\times \wG),\#^{\tau}_{\gamma},^{\#^{\tau}_{\gamma}}\big)$ and one has the relation
\begin{equation}
\mathbf{Op}^{\tau}_{\beta}= \Op^{\tau}_{\beta}\circ(\id\otimes\mathscr F)\circ(\id\otimes\mathbf F^{-1})\,.
\end{equation}
Actually this is an isomorphism of $H^*$-algebras, in the sense of Proposition \ref{gaura}. The same Hilbert-Schmidt operators in $L^2(\G)$ can be expressed both by operator-valued symbols defined on $\G\times\wG$ and by scalar symbols defined on the cotangent space $T^*(\G)\cong\G\times\mathfrak g^*$. This is also true for other classes of operators, as those connected with twisted crossed products. 
}
\end{Remark}

The quantization $\mathbf{Op}^{\tau}_{\beta}$ is associated to a new type of twisted Weyl system, indexed by the points of $\G\times\mathfrak g^*$, in terms of which one can write 
$$
\mathbf{Op}^{\tau}_{\beta}(\mathbf s)=\int_{\G}\int_{\mathfrak g^*}\widehat{\mathbf s}(\mathcal X,x)\mathbf W^{\tau}_{\beta}(x,\mathcal X)\,d\m(x)d\mathcal X\,,
$$
using the notation $\,\widehat{\mathbf s}:=(\mathbf F\otimes\mathbf F^{-1})\mathbf s$\,. For this one defines the unitary operator $\mathbf W^{\tau}_{\beta}(x,\mathcal X)$  in $L^{2}(\G)$ by
\begin{equation*}\label{vacio}
\big[\mathbf W^{\tau}_{\beta}(x,\mathcal X)u\big](q):=\beta(q;x)\,e^{i\<\log[\tau(x)^{-1}q]\mid\mathcal X\>}u(x^{-1}q)\,.
\end{equation*}

 Assuming for simplicity that $\tau(\e)=\e$\,, it is more enlightening to define the unitary operators $\mathbf U_{\beta}(x):=\mathbf W^{\tau}_{\beta}(x,0)$ and $\mathbf V(\mathcal X):=\mathbf W^{\tau}_{\beta}({\sf e},\mathcal X)$ in $L^{2}(\G)$\,, explicitly given by
\begin{equation*}\label{belea}
\big[\mathbf U_{\beta}(x)u\big](q)=\beta(q;x)u(x^{-1}q) \,,\quad[\mathbf V(\mathcal X)u](x)=e^{i\<\log q\mid\mathcal X\>}u(q) \,.
\end{equation*}
It is easy to show that for $x,y\in\G$ and $\mathcal X,\mathcal Y\in\mathfrak g^*$
\begin{equation*}
\begin{split}
\mathbf V(\mathcal X)\mathbf V(\mathcal Y) &= \mathbf V(\mathcal Y)\mathbf V(\mathcal X)=\mathbf V(\mathcal X+\mathcal Y)\,, \\
\mathbf U_{\beta}(x)\mathbf U_{\beta}(y) &=\gamma(x,y)\mathbf U_{\beta}(xy)\,,\\
\mathbf U_{\beta}(x)\mathbf V(\mathcal X)&=\Delta(x,\mathcal X)\mathbf V(\mathcal X)\mathbf U_{\beta}(x)\,,
\end{split}
\end{equation*}
where $\,\gamma(x,y)$ is the operator of multiplication by $\,[\gamma(x,y)](\cdot)$ in $L^2(\G)$\,, and $\Delta(x,\mathcal X)$ is the operator of multiplication by 
$$
\G\ni q\to[\Delta(x,\mathcal X)](q):=e^{i\<\log(x^{-1}q)-\log q\mid\mathcal X\>}.
$$

\subsection{Magnetic fields and group cocycles}\label{fourtehinn}

For $x,y\in\G$ one sets $[x,y]:\mathbb R\rightarrow\G$ by
\begin{equation*}\label{abb}
[x,y]_s:=\exp[(1-s)\log x+s\log y]=\exp[\log x+s(\log y-\log x)]\,.
\end{equation*}
The function $[x,y]$ is smooth and satisfies 
$$
[y,x]_s=[x,y]_{1-s}\,,\quad[x,y](0)=x\quad{\rm and}\quad [x,y](1)=y\,.
$$ 
In addition, $[\e,y]$ is a $1$-parameter subgroup passing through $y$\,: one has 
$$
[\e,y]_{s+t}=[\e,y]_s[\e,y]_t\,,\quad\forall\,y\in\G\,,\,s,t\in\R\,.
$$
{\it The segment in $\G$ connecting $x$ to} $y$ is $[[x,y]]:=\big\{\,[x,y]_s\mid s\in[0,1]\,\big\}$\,. 

\medskip
Let us also set $\,\Delta:=\{\,(t,s)\in[0,1]^2\mid s\le t\,\}$\,. For $x,y,z$ one defines the function $\<x,y,z\>:\R^2\rightarrow\G$
\begin{equation*}\label{function}
\<x,y,z\>_{t,s}:=\exp\big[\log x+t(\log y-\log x)+s(\log z-\log y)\big]
\end{equation*}
and the set $\,\<\!\<x,y,z\>\!\>:=\<x,y,z\>_\Delta\subset\G$\,. Note that 
\begin{equation*}\label{potop}
\<x,y,z\>_{0,0}=x\,,\quad\<x,y,z\>_{1,0}=y\,,\quad\<x,y,z\>_{1,1}=z\,,
\end{equation*}
\begin{equation*}\label{potrop}
\<x,y,z\>_{t,0}=[x,y]_t\,,\quad\<x,y,z\>_{1,s}=[y,z]_s\,,\quad\<x,y,z\>_{t,t}=[x,z]_t\,,
\end{equation*}
so the boundary of $\<\!\<x,y,z\>\!\>$ is composed of the three segments $[[x,y]]$\,, $[[y,z]]$ and $[[z,x]]$\,.

\medskip
A $1$-form $A$ on $\G$ will be seen as a (smooth) map $A:\G\rightarrow\g^*$ and it gives rise to a $1$-form $A\circ\exp:\g\rightarrow\g^*$. Its circulation through the segment $[[x,y]]$ is the real number
\begin{equation*}\label{circ}
\Gamma^A[[x,y]]\equiv\int_{[[x,y]]}\!A:=\int_0^1\!\big\<\log y-\log x\,\big\vert\, A\big([x,y]_s\big)\big\>\,ds\,.
\end{equation*}
It is easy to check that 
$$
\Gamma^0[[x,y]]=0\,,\quad\Gamma^A[[x,x]]=0\,,\quad\Gamma^A[[y,x]]=-\Gamma^A[[x,y]]\,.
$$

\begin{Remark}\label{global}
{\rm Setting $\,{\sf A}:=A\circ\exp:\g\rightarrow\g^*$, one gets a $1$-form on $\g$\,. Our definition is actually
\begin{equation*}\label{sari}
\Gamma^A[[x,y]]=\Gamma^{\sf A}[[\log x,\log y]]\,,
\end{equation*}
using the obvious and well-accepted definition of the circulation in the Lie algebra $\mathfrak g$ (a vector space)
\begin{equation*}\label{w-a}
\Gamma^{\sf A}[[X,Y]]:=\int_0^1\!\,\<\,Y-X\mid {\sf A}[(1-s)X+sY]\,\>\,ds\,.
\end{equation*}
}
\end{Remark}

\medskip
Let $B$ be a $2$-form on $\G$\,, seen as a smooth map sending $x\in\G$ into the skew-symmetric bilinear form $B(x):\g\times\g\rightarrow\mathbb R$\,. One defines
\begin{equation}\label{eguation}
\Gamma^B\<\!\<x,y,z\>\!\>\equiv\int_{\<\!\<x,y,z\>\!\>}\!\!B\,:=\int_0^1\!dt\int_0^t\!ds\,B\big(\<x,y,z\>_{t,s}\big)\big(\log x-\log y,\log x-\log z\big)\,.
\end{equation}

\begin{Proposition}\label{aya}
Let $B$ be a magnetic field, i.e. a closed $2$-form on $\G$\,. Then 
\begin{equation*}\label{asta}
\gamma^B(q;x,y):=e^{i\Gamma^B\<\!\<q,x^{-1}q,y^{-1}x^{-1}q\>\!\>}
\end{equation*}
defines a $2$-cocycle of $\,\G$ with values in $C(\G;\T)$\,, satisfying the extra condition
\begin{equation}\label{speciala}
\gamma^B(x^{-1}\!,x)=1=\gamma^B(x,x^{-1})\,,\quad\forall\,x\in\G\,,
\end{equation}
\end{Proposition}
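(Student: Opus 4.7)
The plan is to recognize $\gamma^B$ as the exponential of a magnetic flux and to reduce the $2$-cocycle identity \eqref{comits} to Stokes' theorem applied to the closed $2$-form $B$. The key observation is that, under the diffeomorphism $\exp:\g\to\G$, the ``curved triangle'' $\langle\!\langle x,y,z\rangle\!\rangle$ is the $\exp$-image of the affine $2$-simplex in $\g$ with vertices $\log x,\log y,\log z$, and formula \eqref{eguation} computes (up to a sign absorbed by the antisymmetry of $B$, since $B(X-Y,X-Z)=B(Y-X,Z-Y)$) the flux of $B$ through this triangle.

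For the $2$-cocycle identity, I would introduce the four points $Q_0:=q$, $Q_1:=x^{-1}q$, $Q_2:=y^{-1}x^{-1}q$, $Q_3:=z^{-1}y^{-1}x^{-1}q$ and verify directly from the definitions that the four triangles $\langle\!\langle q;x,y\rangle\!\rangle$, $\langle\!\langle q;xy,z\rangle\!\rangle$, $\langle\!\langle x^{-1}q;y,z\rangle\!\rangle$ and $\langle\!\langle q;x,yz\rangle\!\rangle$ occurring in \eqref{comits} coincide respectively with $\langle\!\langle Q_0,Q_1,Q_2\rangle\!\rangle$, $\langle\!\langle Q_0,Q_2,Q_3\rangle\!\rangle$, $\langle\!\langle Q_1,Q_2,Q_3\rangle\!\rangle$ and $\langle\!\langle Q_0,Q_1,Q_3\rangle\!\rangle$. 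These are precisely the four $2$-faces of the $\exp$-image of the affine $3$-simplex in $\g$ spanned by $\log Q_0,\log Q_1,\log Q_2,\log Q_3$. Taking logarithms, the identity \eqref{comits} reduces to the additive relation
\begin{equation*}
\Gamma^B\langle\!\langle Q_0,Q_1,Q_2\rangle\!\rangle+\Gamma^B\langle\!\langle Q_0,Q_2,Q_3\rangle\!\rangle=\Gamma^B\langle\!\langle Q_1,Q_2,Q_3\rangle\!\rangle+\Gamma^B\langle\!\langle Q_0,Q_1,Q_3\rangle\!\rangle,
\end{equation*}
which, with the standard simplicial orientation, is exactly $\int_{\partial T}B=0$ for $T$ the $\exp$-image of the above tetrahedron. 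Since $dB=0$, Stokes' theorem gives $\int_{\partial T}B=\int_T dB=0$, so the identity holds on the nose (not merely modulo $2\pi\Z$) and then lifts to $\gamma^B=e^{i\Gamma^B}$.

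The normalizations $\gamma^B(q;x,\e)=1=\gamma^B(q;\e,x)$ and the relation \eqref{speciala} all follow directly from antisymmetry of $B$: in each case two of the three vertices of the defining triangle coincide, so one of the arguments $\log x-\log y$ or $\log x-\log z$ in \eqref{eguation} either vanishes or equals the other argument, forcing the integrand to be identically zero. Continuity of $(q,x,y)\mapsto\gamma^B(q;x,y)$ as an element of $C(\G;\T)$ is automatic because $B$ is smooth and the parametrized integral in \eqref{eguation} depends smoothly on all parameters.

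The main technical hurdle I anticipate is orientation bookkeeping: one must verify that the four triangles, with the orientations induced by the parametrization $\langle x,y,z\rangle_{t,s}=\exp[\log x+t(\log y-\log x)+s(\log z-\log y)]$ over $\Delta$, assemble as signed faces into the oriented boundary $\partial[Q_0,Q_1,Q_2,Q_3]=[Q_1,Q_2,Q_3]-[Q_0,Q_2,Q_3]+[Q_0,Q_1,Q_3]-[Q_0,Q_1,Q_2]$ with signs matching the logarithm of \eqref{comits}. This is routine but calls for care, especially because the integrand in \eqref{eguation} is written in terms of $\log x-\log y$ and $\log x-\log z$ rather than the geometric tangent vectors $\log y-\log x$ and $\log z-\log y$ of the chosen parametrization.
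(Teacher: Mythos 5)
Your proof is correct and follows essentially the same route as the paper: both identify the four triangles in the $2$-cocycle relation with the faces of the $\exp$-image of the tetrahedron on $\log q$, $\log(x^{-1}q)$, $\log(y^{-1}x^{-1}q)$, $\log(z^{-1}y^{-1}x^{-1}q)$, and both invoke Stokes' theorem for the closed $2$-form $B$; the paper just writes the vanishing-boundary-flux identity with two of the faces transposed and then uses $\Gamma^B\langle\!\langle a,b,c\rangle\!\rangle=-\Gamma^B\langle\!\langle a,c,b\rangle\!\rangle$ to reorder, so the two formulations are the same up to this cosmetic step. Your observation that the integrand $B(\log x-\log y,\log x-\log z)$ equals $B(\log y-\log x,\log z-\log y)$ by bilinearity and antisymmetry correctly resolves the orientation worry you flagged.
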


\begin{proof}
Since $B$ has been supposed smooth, it is clear that $\gamma^B$ is well-defined and continuous.

By Stokes' Theorem, since $B$ is a closed $2$-form, one gets for $a,b,c,d\in\G$
\begin{equation*}\label{zdocs}
\Gamma^B\<\!\<a,b,c\>\!\>+\Gamma^B\<\!\<a,d,b\>\!\>+\Gamma^B\<\!\<b,d,c\>\!\>+\Gamma^B\<\!\<a,c,d\>\!\>=0\,.
\end{equation*}
Setting
$$
a:=q\,,\;b=x^{-1}q\,,\;c:=y^{-1}x^{-1}q\,,\; d=z^{-1}y^{-1}x^{-1}q\,,
$$
one gets
$$
\begin{aligned}
&\Gamma^B\<\!\<q,x^{-1}q,y^{-1}x^{-1}q\>\!\>+\Gamma^B\<\!\<q,z^{-1}y^{-1}x^{-1}q,x^{-1}q\>\!\>\\
+&\Gamma^B\<\!\<x^{-1}q,z^{-1}y^{-1}x^{-1}q,y^{-1}x^{-1}q\>\!\>+\Gamma^B\<\!\<q,y^{-1}x^{-1}q,z^{-1}y^{-1}x^{-1}q\>\!\>=0\,.
\end{aligned}
$$
Using the identity $\Gamma^B\<\!\<a,b,c\>\!\>=-\Gamma^B\<\!\<a,c,b\>\!\>$ for the second and the third terms and taking imaginary exponentials, one gets the $2$-cocycle identity. Normalization is easy, using \eqref{eguation} and the fact that every $B(z)$ is anti-symmetric; for example
$$
\gamma^B(q;x,\e)=e^{i\Gamma^B\<\!\<q,x^{-1}q,x^{-1}q\>\!\>}=1\,.
$$
The identity \eqref{speciala} reads $\,e^{i\Gamma^B\<\!\<q,xq,q\>\!\>}=1=e^{i\Gamma^B\<\!\<q,x^{-1}q,q\>\!\>},$ which is obvious from \eqref{eguation} since for every $z\in\G$ and $X\in\mathfrak g$ one has $[B(z)](X,0)=0$ \,.
\end{proof}

\begin{Corollary}\label{rolar}
 $\G$ possesses a twisted symmetric quantization.
\end{Corollary}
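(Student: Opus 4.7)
The strategy is to produce an explicit symmetric map $\tau$ together with a non-trivial 2-cocycle $\gamma$ satisfying the hypothesis $\gamma(z,z^{-1})=1_\A$ of Proposition \ref{symmetric}, after which the conclusion is immediate.

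First, I would exploit the fact that $\exp:\g\to\G$ is a global diffeomorphism (since $\G$ is connected, simply connected and nilpotent) to define
$$
\tau:\G\to\G\,,\quad\tau(x):=\exp\!\left(\tfrac{1}{2}\log x\right).
$$
This is clearly continuous with $\tau(\e)=\e$. I would then verify that $\tau$ is symmetric in the sense of Subsection 3.5, i.e.\ $\tau(x)=x\,\tau(x^{-1})$ for all $x\in\G$. Setting $X:=\log x$, one has $\log(x^{-1})=-X$, so the claim reduces to the identity $\exp(X)\exp(-X/2)=\exp(X/2)$ in $\G$. This holds because $X$ and $-X/2$ are collinear in $\g$, hence commute, so the Baker--Campbell--Hausdorff formula collapses to the abelian addition rule along the one-parameter subgroup $\{\exp(tX)\}_{t\in\R}$.

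Next, I would produce the cocycle. Choose any non-zero closed $2$-form $B$ on $\G$ (for example, transport a non-zero constant-coefficient skew form on $\g$ to $\G$ via $\log$; this is possible whenever $\dim\G\ge 2$). Proposition \ref{aya} then yields a continuous $2$-cocycle $\gamma^B:\G\times\G\to C(\G;\T)$ which \emph{automatically} satisfies the normalization \eqref{speciala}, namely $\gamma^B(z,z^{-1})=1=\gamma^B(z^{-1},z)$ for every $z\in\G$. Taking $\A$ to be any left-invariant $C^*$-subalgebra of $\mathcal{LUC}(\G)$ containing the range of $\gamma^B$ in its unitary multipliers (e.g.\ $\A=\mathcal{LUC}(\G)$ itself), the pair $(\A,\gamma^B)$ is a compatible basic data in the sense of Definition \ref{barzadata}.

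Finally, I would invoke Proposition \ref{symmetric}: since $\tau$ is symmetric and $\gamma^B(z,z^{-1})=1_\A$, the quantization $\Op^\tau_\beta$ associated with any pseudo-trivialization $\beta$ of $\gamma^B$ is symmetric. The cocycle is non-trivial as long as $B\neq 0$, so $\G$ possesses a twisted symmetric quantization. The only non-routine point is the verification that $\tau(x)=x\,\tau(x^{-1})$, and even that is really a one-line consequence of one-parameter subgroups being abelian; everything else is a direct appeal to Propositions \ref{aya} and \ref{symmetric}.
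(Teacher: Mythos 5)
Your proof is correct and follows the same strategy as the paper's: produce a symmetric map $\tau$, use Proposition \ref{aya} to obtain a magnetic $2$-cocycle $\gamma^B$ satisfying \eqref{speciala}, and conclude via Proposition \ref{symmetric}. The apparent difference in the choice of $\tau$ is cosmetic. The paper cites \cite[Prop.\ 4.3]{MR} for $\tau(x)=\int_0^1\exp[s\log x]\,ds$, while you take $\tau(x)=\exp\!\big(\tfrac12\log x\big)$; but in exponential coordinates (where $\G$ is identified with $\g$ as a manifold) the curve $s\mapsto\exp(s\log x)$ is the straight segment $s\mapsto s\log x$, whose coordinate-wise average is $\tfrac12\log x$, so the two formulas define \emph{the same} map. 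Your direct verification of the symmetry relation $\tau(x)=x\,\tau(x^{-1})$, using commutativity along the one-parameter subgroup $\{\exp(t\log x)\}_{t\in\R}$, is a clean self-contained replacement for the external reference. One small caveat: $\mathcal{LUC}(\G)$ is probably not the ideal choice of $\A$, since the functions $q\mapsto\gamma^B(q;x,y)$ are merely continuous and need not be left uniformly continuous; taking $\A=C_0(\G)$, whose multiplier unitary group is exactly $C(\G;\T)$, sidesteps this and is also what the paper works with in Subsection \ref{ws}.
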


\begin{proof}
By \cite[Prop 4.3]{MR} a connected simply connected nilpotent Lie group admits a symmetric map $\tau$\,; it is given by 
$$
x\to\tau(x):=\int_0^1[\e,x]_s\,ds=\int_0^1\exp[s\log x]\,ds\,.
$$ 
Thus the result follows from Propositions \ref{symmetric} and Proposition \ref{aya} .
\end{proof}

Being a closed $2$-form, the magnetic field can be written as $B=dA$ for some $1$-form (vector potential). Any other vector potential $\tilde A$ satsfying $B=d\tilde A$ is related to the first by $\tilde A=A+d\psi$\,, where $\psi$ is a smooth function on $\G$\,.

\begin{Proposition}\label{ifort}
Suppose that $B=dA$\,. The relation 
$$
\beta^A(q;x):=e^{i\Gamma^A[\![q;x^{-1}q]\!]}\,,\quad x,q\in\G
$$ 
defines a pseudo-trivialization of the $2$-cocycle $\gamma^B$. 
\end{Proposition}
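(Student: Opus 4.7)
The plan is to verify directly the defining identity of a pseudo-trivialization, namely
\[
\gamma^B(q;y,z)=\beta^A(q;y)\,\beta^A(y^{-1}q;z)\,\beta^A(q;yz)^{-1},\qquad \forall\,q,y,z\in\G\,,
\]
together with continuity of $\beta^A$. Continuity is immediate from the smoothness of $A$ and $\exp$ and the continuous dependence of the integrand in the definition of $\Gamma^A$. The cocycle identity above, once rewritten via the exponential form of $\gamma^B$ and $\beta^A$, reduces to showing the equality of real numbers
\[
\Gamma^B\langle\!\langle q,y^{-1}q,z^{-1}y^{-1}q\rangle\!\rangle \;=\; \Gamma^A[\![q,y^{-1}q]\!]+\Gamma^A[\![y^{-1}q,z^{-1}y^{-1}q]\!]-\Gamma^A[\![q,z^{-1}y^{-1}q]\!]\,.
\]

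The key observation is that the right-hand side is exactly the circulation of $A$ along the oriented boundary of the triangle $\langle\!\langle q,y^{-1}q,z^{-1}y^{-1}q\rangle\!\rangle$, traversed as $q\to y^{-1}q\to z^{-1}y^{-1}q\to q$ (the third segment appears with a minus sign because $\Gamma^A[\![z^{-1}y^{-1}q,q]\!]=-\Gamma^A[\![q,z^{-1}y^{-1}q]\!]$, by the properties of $\Gamma^A$ recorded just after its definition). Thus the statement to prove is an instance of Stokes' theorem for the relation $B=dA$.

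To rigorously apply Stokes in the present framework, I would pull everything back to the Lie algebra via $\exp$, as suggested by Remark \ref{global}. Writing $\mathsf A:=A\circ\exp$ and $\mathsf B:=B\circ\exp$, the segment $[\![x,y]\!]$ corresponds to the Euclidean segment $[\![\log x,\log y]\!]$ in $\mathfrak g$, and the triangle $\langle\!\langle x,y,z\rangle\!\rangle$ corresponds to the Euclidean triangle with vertices $\log x,\log y,\log z$ (its parametrization by $\Delta$ in the definition of $\Gamma^B$ being an affine barycentric parametrization, as a direct expansion of $\langle x,y,z\rangle_{t,s}$ shows). With the identifications built into the author's definitions, $\Gamma^A[\![x,y]\!]$ is the standard line integral of $\mathsf A$ along the affine segment, and $\Gamma^B\langle\!\langle x,y,z\rangle\!\rangle$ is the standard surface integral of $\mathsf B$ over the affine triangle. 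Since exterior differentiation commutes with pullback, the hypothesis $B=dA$ gives $\mathsf B=d\mathsf A$, and Stokes' theorem on the Euclidean $2$-simplex yields the required equality of circulations.

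Finally, taking imaginary exponentials gives the pseudo-trivialization identity; the result holds literally (not merely modulo $2\pi$) thanks to the exponential. The main technical point, and the step I expect to be the principal obstacle, is justifying carefully that the slightly non-standard integral formulas for $\Gamma^A$ and $\Gamma^B$ really do coincide with the Euclidean line and surface integrals of the pulled-back forms on $\mathfrak g$ in a way compatible with $\exp^*(dA)=d(\exp^*A)$; this is where one has to be precise about the trivialization $A:\G\to\mathfrak g^*$ and $B:\G\to\Lambda^2\mathfrak g^*$ used throughout the section. Everything else is bookkeeping.
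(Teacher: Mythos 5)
Your proposal is correct and proceeds by essentially the same route as the paper's own (very terse) proof: reduce the identity $\gamma^B=\delta^1(\beta^A)$ to an equality of circulations and then invoke Stokes' theorem for $B=dA$. Your additional discussion of pulling everything back to $\mathfrak g$ via $\exp$, using Remark~\ref{global} and the observation that $\langle x,y,z\rangle$ is an affine barycentric parametrization, is a useful fleshing out of the one-line invocation of Stokes in the paper, but it is the same argument. (Minor remark: the version of the circulation identity printed as \eqref{circuflux} carries a sign typo in its last term relative to the one you derive; your form, with $-\Gamma^A[\![q,(xy)^{-1}q]\!]$, is the one that follows from the pseudo-trivialization identity \eqref{vrumushel} together with $\Gamma^A[\![y,x]\!]=-\Gamma^A[\![x,y]\!]$, and is what the paper clearly intends.)
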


\begin{proof}
The identity $\gamma^B=\delta^1\big(\beta^A\big)$ is reduced to
\begin{equation}\label{circuflux}
\Gamma^B\<\!\<q,x^{-1}q,y^{-1}x^{-1}q\>\!\>=\Gamma^A[[q,x^{-1}q]]+\Gamma^A[[x^{-1}q,y^{-1}x^{-1}q]]+\Gamma^A[[q,(xy)^{-1}q]]\,,
\end{equation}
which follows from Stokes' Theorem.
\end{proof}

So we can write down all the formulas and results of the preceding subsections for the $2$-cocycle $\gamma^B$ and its pseudo-trivialization $\beta^A$. For example, writing $\mathbf{Op}_{\beta^A}\equiv\mathbf{Op}_{A}$ (a magnetic pseudo-differential operator) and  $\mathbf U_{\beta^A}\!\equiv\mathbf U_{A}$ (magnetic translations), one has
$$
[\mathbf{Op}_{A}(\mathbf s)u](x)=\int_{\G}\int_{\mathfrak g^*} e^{i\int_{[[x,y]]}\!A}\,e^{i\<\log(xy^{-1})\mid\mathcal X\>}\mathbf s\big(x,\mathcal X\big)u(y)\,d\m(y)d\mathcal X\,,
$$
$$
\big[\mathbf U_{A}(x)u\big](q)=e^{i\int_{[[q,x^{-1}q]]}\!A}\,u(x^{-1}q)=e^{i\int_0^1\<\log(x^{-1}q)-\log q\,\mid A([q,x^{-1}q]_s)\>ds}\,u(x^{-1}q) \,.
$$

If $\G=\mathbb{R}^n$, the dual group can be identified the vector space dual $\mathbb{R}^n$. It is also true that $\R^n$ is identified with its Lie algebra and (then) with its dual, so in this case the maps $\exp$ and $\log$ simply disappear from the formulas. In this case, if $\tau(x)=x/2$\,, one get the symetric (Weyl) twisted quantization.

\bigskip
\noindent
{\bf Acknowledgements:} The authors have been supported by the N\'ucleo Milenio de F\'isica Matem\'atica RC120002. M. M. acknowledges support from the Fondecyt Project 1160359.


\end{document}